\newcommand{\bk}{\Bbbk}
\newcommand{\Z}{\mathbb{Z}}
\newcommand{\F}{\mathbb{F}}
\newcommand{\scH}{\mathscr{H}}
\newcommand{\scL}{\mathscr{L}}
\newcommand{\Spec}{\mathrm{Spec}}
\newcommand{\ba}{\mathbf{a}}
\newcommand{\bb}{\mathbf{b}}
\newcommand{\bc}{\mathbf{c}}
\newcommand{\bi}{\mathbf{i}}
\newcommand{\bj}{\mathbf{j}}
\newcommand{\Gr}{\mathrm{Gr}}
\def\lotimes{\@ifnextchar_{\@lotimessub}{\@lotimesnosub}}
\def\@lotimessub_#1{\mathchoice{\mathbin{\mathop{\otimes}^L}_{#1}}%
  {\otimes^L_{#1}}{\otimes^L_{#1}}{\otimes^L_{#1}}}
\def\@lotimesnosub{\mathbin{\mathop{\otimes}^L}}
\numberwithin{equation}{section}
\theoremstyle{definition}
\theoremstyle{remark}
\newtheorem{theorem}{Theorem}
\newtheorem{proposition}[theorem]{Proposition}
\newtheorem{definition}[theorem]{Definition}
\newtheorem{corollary}[theorem]{Corollary}
\newtheorem{conjecture}[theorem]{Conjecture}
\newtheorem{lemma}[theorem]{Lemma}
\theoremstyle{remark}
\newcommand{\A}{\mathbb{A}}
\newcommand{\G}{\mathbb{G}}
\newcommand{\p}{\mathbb{P}}
\newcommand{\Zhat}{\widehat{\mathbb{Z}}}
\newcommand{\FinS}{\mathrm{FinS}}
\newcommand{\FinSs}{\mathrm{FinS}^{\mathrm{sur}}}
\newcommand{\Gal}{\mathrm{Gal}}
\newcommand{\atimes}{\stackbin{\leftarrow}{\times}}
\newcommand{\et}{\text{\'et}}
\newcommand{\proet}{\text{pro\'et}}
\newcommand{\qlbar}{\overline{\mathbb{Q}}_l}
\title{Unipotent nearby cycles and nearby cycles over general bases}
\author{Andrew Salmon}
\email{asalmon@alum.mit.edu}
\begin{document}

\maketitle

\begin{abstract}
    We show that under some conditions, two constructions of nearby cycles over general bases coincide.  More specifically, we show that under the assumption of $\Psi$-factorizability, the constructions of unipotent nearby cycles over an affine space in~\cite{achar2023higher} can be described using the theory of nearby cycles over general bases via the vanishing topos.  In particular, this applies to nearby cycles of Satake sheaves on Beilinson-Drinfeld Grassmannians with parahoric ramification.
\end{abstract}

\section{Introduction}

Let $S$ and $X$ be schemes of finite type over an algebraically closed base field $\mathbb{F}$ equipped with a map $f \colon X \to S$.  Over $X$, we may define $D^b_c(X, \bk)$, the bounded derived category of constructible sheaves with $\bk$ coefficients.  We will mostly restrict to the case that $\bk$ is a finite extension of $\mathbb{Q}_{\ell}$ or $\mathbb{F}_{\ell}$, with $\ell$ invertible in $\mathbb{F}$, or an algebraic closure thereof, where the corresponding derived categories of sheaves can be defined via the adic formalism.  If $S$ is one-dimensional with generic point $\eta$ and special point $s$, the classical theory of nearby cycles gives a functor between derived categories of constructible sheaves from a generic fiber to a special fiber of $X$ over $S$.  In particular, if $\eta$ is a generic point of $S$ and $s$ is a special point, let $X_{\eta}$, resp. $X_s$ be the fibers of $X$ over these points.  There is a nearby cycles functor $\Psi \colon D^b_c(X_\eta, \bk) \rightarrow D^b_c(X_s, \bk)$, which is t-exact with respect to the perverse t-structure and admits an action of the local inertia group.  In~\cite{beilinson1987glue}, Beilinson gave a construction of unipotent nearby cycles, where the action of local inertia factors through tame inertia and the tame inertia acts unipotently.

For some applications, it is desirable to have a version of nearby cycles over a base $S$ that is not one-dimensional.  Two such approaches exist that supply such a theory.  The first is a generalization of Beilinson's construction to an arbitrary base, introduced in~\cite{gaitsgory2004braiding} and subsequently generalized in~\cite[Section~4]{salmon2023unipotent},~\cite[Section~4.6]{eteve2023monodromiques}, and~\cite{achar2023higher}.  When defined, for a perverse sheaf $K$ on a scheme $X$ over $\A^n$, this produces a perverse sheaf $\Upsilon(K)$ on the special fiber of $X$ over $0 \in \A^n$.  More generally, Achar and Riche show that for any map $\alpha \colon P_* \rightarrow Q_*$ of pointed finite sets, there is a (conditionally defined) perverse sheaf on $X \times_{\A^P \times \bar{0}} \A^Q \times \bar{0}$ that is written $\Upsilon^{\alpha}(K)$.  The second approach is the theory of nearby cycles over general bases using the vanishing topos, as in~\cite{orgogozo2006modifications}.  This approach works over a general base $S$, not just affine space, and the study of monodromy of these nearby cycles is more complicated.  For any \'etale sheaf $K$ on $X$ and any specialization $s \rightarrow t$ of geometric points on the etale topos of the base $S$, there is an object $R(\Psi)_{s}^{t} K$ in the derived category of \'etale sheaves.

In Corollary~\ref{cor: unipotent general bases} and Corollary~\ref{cor: unipotent general bases ula}, we give conditions under which $\Upsilon^{\alpha}(K)$ can be understood as coming from $R(\Psi)_{s}^{t} K$.  In particular, setting $S = \A^P$, surjective maps $\alpha \colon P_* \rightarrow Q_*$ as above allows us to choose a specialization $F(\alpha)$ in the \'etale topos.  Assuming that $(f, K)$ satisfy a condition called $\Psi$-factorizability, then $(\Psi_f)_{F(\alpha)} K[|Q|-|P|]$ is perverse, and there is a canonical subobject
\[(\Psi^u_f)_{F(\alpha)} K[|Q|-|P|] \subseteq (\Psi_f)_{F(\alpha)} K[|Q|-|P|].\]
Under an additional assumption that $(f, K)$ is universally locally acyclic when restricted to $\G_m^P \subset \A^P$, Corollary~\ref{cor: unipotent general bases ula} gives an isomorphism
\[ \Upsilon^{\alpha}_f(K) \cong (\Psi^u_f)_{F(\alpha)} K[|Q|-|P|]. \]

To motivate the notion of $\Psi$-factorizability, in geometric representation theory, one might consider a one-parameter family, the Beilinson-Drinfeld Grassmannian ramified at a closed point $x$ on a smooth curve $C$.  Satake sheaves naturally exist away from the closed point which categorify elements of a spherical Hecke algebra, and their nearby cycles at the closed point give a categorification of central elements in the affine Hecke algebra, realizing the Bernstein isomorphism \cite{gaitsgory1999construction}.  One hopes, then, to show that the nearby cycles functor is a central functor from a tensor category to the monoidal affine Hecke category.  Checking that this functor satisfies braiding relations, expected of a central functor, requires the use of a two-parameter degeneration of a Beilinson-Drinfeld Grassmannian over a two-dimensional base, and one must show that nearby cycles commute with one another \cite{gaitsgory2004braiding}.  Since there is no natural map from $\Psi_1 \Psi_2$ to $\Psi_2 \Psi_1$, one must really construct an object that naturally maps to both, which is a nearby cycle over a two-dimensional base, in which case these maps are naturally isomorphisms.  $\Psi$-factorizability generalizes this situation where we require that maps to iterated nearby cycles are an isomorphism, and its definition requires the existence of an isomorphism~\ref{eqn: psi factorizability definition} for any composition of specializations of geometric points along the base.

There is an interesting analogy to be made to the factorization structure that Satake sheaves exhibit, whereby specialization to the diagonal on the Beilinson-Drinfeld Grassmannian, that is, the fusion product, is the same as the convolution product.  The centrality of the construction of nearby cycles sheaves in the affine Hecke category \cite{gaitsgory2004braiding} is a ramified version of this property and can be viewed as a generalization of this property in light of the result of Richarz that the Satake sheaves over the Beilinson-Drinfeld Grassmannian are universally locally acyclic \cite{richarz2014new}.  That is, under the assumption of local acyclicity, taking nearby cycles is the same as restriction to the fiber, and so commuting nearby cycles would then encode commutativity of the convolution product in the Satake category.  We note that Satake sheaves on an iterated Beilinson-Drinfeld Grassmannian over $\A^n$ with parahoric ramification at $0 \in \A^1$ satisfy the assumptions of Corollary~\ref{cor: unipotent general bases ula}.  In fact, Corollary~\ref{cor: maps agree} shows that certain ``fusion'' maps constructed in this setting also agree.  Such fusion isomorphisms have found use in the Langlands program, where they play a role in showing cases where the shriek pushforward of IC sheaves on the moduli space of shtukas, which is not proper over the base, nevertheless commutes with nearby cycles \cite[Theorem~4.12]{salmon2023restricted}.

\subsection*{Acknowledgements}

This paper was originally written as an appendix to \cite{achar2023higher}, and I would like to thank Pramod Achar and Simon Riche for their interest in this work.  I would especially like to thank Simon Riche for careful readings of drafts of this paper and for many comments that improved the paper's correctness, clarity, and quality.  I would also like to thank an anonymous referee identifying several points where the paper had gaps or needed to be improved.

\section{Nearby cycles over general bases}

\subsection{Review of nearby cycles over general bases}

Consider morphism $f \colon X \rightarrow S$ between schemes of finite type over an algebraically closed field $\mathbb{F}$, two geometric points with a specialization $t \rightarrow s$ on the base $S$ that gives a morphism $t \rightarrow S_{(s)}$ to the \'etale localization, and a bounded-below complex of sheaves $K \in D^+(X_{t}, \bk)$.  In this context, we will define nearby cycles functors.
\begin{definition}
    The shredded nearby cycle $R(\Psi_f)_{t}^s \colon D^+(X_{t}, \bk) \rightarrow D^+(X_s, \bk)$ is the functor $i^* Rj_*$, where $j \colon X_{t} \rightarrow X_{(s)}$ and $i \colon X_s \rightarrow X_{(s)}$.
\end{definition}
To simplify notation, we will freely drop the structure map and write $R(\Psi)_{t}^s$ when $f$ is clear.  Let $x$ be a geometric point of $X$ such that $f(x) = s$.  Recall that the Milnor tube is the scheme $X_{(x)} \times_{S_{(s)}} S_{(t)}$.  In this setting, there is a natural identification between the stalks of nearby cycles and the cohomology of the Milnor tube.  That is,
\begin{equation} 
(R(\Psi_f)_{t}^{s} K)_x \cong R\Gamma(X_{(x)} \times_{S_{(s)}} S_{(t)}, K|_{X_{(x)} \times_{S_{(s)}} S_{(t)}}).
\end{equation}

We must make a comment about the meaning of derived categories such as $D^b(X, \bk)$ here.  If $\bk$ is torsion, we may use $D^b(X, \bk)$ to denote the bounded derived category of \'etale sheaves.  In the case when $\bk$ is a finite extension of $\mathbb{Q}_{\ell}$ or an algebraic closure thereof, instead of the \'etale topology, we really want a topos containing the constructible $\ell$-adic sheaves, thereby allowing for limiting constructions.  Such a topos has been constructed by Bhatt and Scholze and is known as the pro-\'etale topology \cite{bhatt2013pro}.  Thus, we let $D(X, \bk)$ be the derived category of all pro-\'etale sheaves.  Inside this category there is a bounded below subcategory of pro-\'etale sheaves $D^+(X, \bk)$, a bounded derived category $D^b(X, \bk)$, and a bounded derived category of constructible sheaves, which we denote $D^b_c(X, \bk)$.  We may also consider the category of abelian sheaves on a topos, which we denote $Ab(X)$ and its derived category $D(X)$.  When $X$ is a point (the topos $\mathrm{Set}$) and $R$ is a ring, we denote the derived category of sheaves of $R$-modules $D(X, R)$ as just $D(R)$.

When we need to make the distinction between \'etale and pro-\'etale categories, we write $X_{\et}$ for the \'etale topos and $X_{\proet}$ for the pro-\'etale topos.  One preliminary result is that the shredded nearby cycles functor has finite cohomological dimension.

\begin{proposition}\label{prop: finite cd}
    For a map $f \colon X \rightarrow S$ be locally of finite type whose fibers have dimension at most $N$.  Let $K$ be an \'etale, resp.\ pro-\'etale sheaf of $\bk$-modules.  Then $R^i(\Psi)_{t}^{s} K = 0$ for $i > 2N$.
\end{proposition}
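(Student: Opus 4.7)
The plan is to combine the stalk formula stated just above the proposition (identifying the stalk of $R(\Psi)_t^s K$ at a geometric point $x$ with the étale cohomology of the Milnor tube $X_{(x)} \times_{S_{(s)}} S_{(t)}$) with Artin's bound on the cohomological dimension of a morphism. Fix a geometric point $x$ of $X_s$; it suffices to show that $H^i(Y, K) = 0$ for $i > 2N$, where $Y := X_{(x)} \times_{S_{(s)}} S_{(t)}$.

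First I would consider the structure map $g \colon Y \to S_{(t)}$. Its fibers are base changes of the fibers of $X_{(x)} \to S_{(s)}$, which are themselves open subschemes of fibers of $f$; in particular, every fiber of $g$ has dimension at most $N$. Since $S_{(t)}$ is strictly Henselian local with closed point $t$, étale cohomology on $S_{(t)}$ with any abelian coefficients is concentrated in degree zero, so the Leray spectral sequence for $g$ collapses to yield
\[ H^i(Y, K) \cong (R^i g_* K)_t. \]
It therefore suffices to show that $R^i g_* K = 0$ for $i > 2N$.

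Next I would invoke Artin's bound on the cohomological dimension of a morphism (as in SGA~4, Exp.~XIV): if a morphism is of finite type with fibers of dimension at most $d$, then its higher direct images of any abelian sheaf vanish in degrees greater than $2d$. The morphism $g$ is not itself of finite type, but it is a cofiltered limit of finite-type morphisms $g_{\alpha, \beta} \colon U_\beta \times_S V_\alpha \to V_\alpha$, indexed by affine étale neighborhoods $U_\beta$ of $x$ in $X$ and $V_\alpha$ of $t$ in $S$; each such $g_{\alpha, \beta}$ again has fibers of dimension at most $N$. Passing to the limit, and using compatibility of étale (resp.\ pro-étale) cohomology with cofiltered limits of qcqs schemes with affine transitions, one obtains the desired vanishing.

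The main obstacle is making the limit-passage rigorous, especially in the pro-étale setting, where one needs to invoke the Bhatt-Scholze formalism to ensure that cohomology and higher direct images behave correctly under cofiltered limits of schemes. Once this is established, the proposition follows immediately.
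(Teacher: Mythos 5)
Your outline starts correctly: by the stalk formula given just before the proposition, the assertion reduces to bounding $H^i(Y, K)$ for the Milnor tube $Y = X_{(x)} \times_{S_{(s)}} S_{(t)}$, and this is exactly the reduction the paper attributes to Gabber. The observation that $g \colon Y \to S_{(t)}$ has fibers of dimension $\le N$ is also correct. However, the crucial step---bounding the cohomological dimension of $Y$---is not correctly justified, and the theorem you invoke is false as stated.

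The claim that for a morphism $h$ of finite type with fibers of dimension $\le d$ one has $R^q h_* = 0$ for $q > 2d$ is not a theorem of SGA 4, Exp.~XIV (which concerns Artin's affine vanishing and the comparison theorem), and it fails in general. For example, let $h \colon \A^3 \setminus \{0\} \hookrightarrow \A^3$ over an algebraically closed field; the fibers of $h$ are zero-dimensional, yet by purity the stalk of $R^5 h_* \underline{\Z/n}$ at the origin is $H^5$ of the punctured strict local scheme, which is $\Z/n(-3) \ne 0$. The bound $R^q f_! = 0$ for $q > 2d$ does hold (SGA~4, Exp.~X), but for $R f_*$ the classical bounds depend on the cohomological dimension of the base and cannot be controlled purely by fiber dimension. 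Moreover, the Leray-collapse step gives no purchase: since $S_{(t)}$ is strictly local with closed point $t$, the stalk $(R^i g_* K)_t$ is $H^i(Y \times_{S_{(t)}} S_{(t)}, K) = H^i(Y, K)$, so the reformulation is tautological, and the limit argument for $g_{\alpha,\beta} \colon U_\beta \times_S V_\alpha \to V_\alpha$ inherits the same problem (the base $V_\alpha$ is not strictly local, so the sheaves $R^i g_{\alpha,\beta,*} K$ need not vanish above degree $2N$). What is actually needed is the genuinely nontrivial input due to Gabber---exploited in \cite[Proposition~3.1]{orgogozo2006modifications}---that a (pro-)finite-type scheme over a strictly local noetherian base with fibers of dimension $\le N$ has $\ell$-cohomological dimension $\le 2N$; the paper's one-line citation to this result is the load-bearing step, and your proposal does not supply a substitute for it.
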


\begin{proof}
The finite cohomological dimension of $\Psi$ is due to Gabber in the case of torsion coefficients in the \'etale topology \cite[Proposition~3.1]{orgogozo2006modifications} by reduction to the Milnor tube.  The proof follows word for word in the pro-\'etale setting.
\end{proof}

Gabber's proof in \cite[Proposition~3.1]{orgogozo2006modifications} suggests a technique to transfer results from the \'etale topology to the pro-\'etale topology, and we follow the strategy in \cite[Section~6.5]{bhatt2013pro} for direct image functors.  Let $R$ be the ring of integers in $\bk$, temporarily assumed to be a finite extension of $\mathbb{Q}_{\ell}$, and let $\mathfrak{m}$ be the maximal ideal so that $R = \lim R / \mathfrak{m}^n$.
For $M \in D(R)$, write $\underline{M}$ as its pullback to a sheaf on $D(X, R)$.  For a sheaf $K \in D^b_c(X_{\proet}, R)$, we may write $K \otimes_R \underline{M} = R\lim (K_n \otimes_{R/\mathfrak{m}^n} \underline{M_n})$.  We form a map
\[
    R(\Psi)_{t}^{s}(K \otimes_R \underline{M}) \rightarrow R(\Psi)_{t}^{s}(K) \otimes_R \underline{M}
\]
the adjoint of the composite map
\[
\begin{aligned}
j_* R\lim(K_n \otimes_{R/\mathfrak{m}^n} \underline{M_n}) &\simeq R\lim j_* (K_n \otimes_{R/\mathfrak{m}^n} \underline{M_n}) \\
&\rightarrow R\lim i_* i^* j_* (K_n \otimes_{R/\mathfrak{m}^n} \underline{M_n}) \\
&\simeq i_* R\lim i^* j_* (K_n \otimes_{R/\mathfrak{m}^n} \underline{M_n}).
\end{aligned}
\]

The following is an analogue of \cite[Lemma~6.5.11.(ii)]{bhatt2013pro}.

\begin{proposition}\label{prop: inverse limit commutes}
    Let $K \in D^b_c(X_{\proet}, R)$ and $M \in D^-(R)$.  The map
    \begin{equation}
        R(\Psi)_{t}^{s}(K \otimes_R \underline{M}) \rightarrow R(\Psi)_{t}^{s}(K) \otimes_R \underline{M}
    \end{equation}
    is an isomorphism.
\end{proposition}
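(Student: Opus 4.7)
The plan is to follow the template of \cite[Lemma~6.5.11.(ii)]{bhatt2013pro}, with Proposition~\ref{prop: finite cd} supplying the required finiteness.

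First I would pass to stalks at a geometric point $x$ of $X_s$. By the Milnor tube identification recalled after Definition~2.1, this reduces the claim to showing that for the Milnor tube $\tilde X_{(x)} := X_{(x)} \times_{S_{(s)}} S_{(t)}$, the natural map
\[
R\Gamma\bigl(\tilde X_{(x)},\, K|_{\tilde X_{(x)}} \otimes_R \underline{M}\bigr) \longrightarrow R\Gamma\bigl(\tilde X_{(x)},\, K|_{\tilde X_{(x)}}\bigr) \otimes_R M
\]
is an isomorphism. Both sides are triangulated functors of $M$ and agree when $M = R$, so the goal is to devissage in $M$ down to that case.

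The dévissage has three stages. First, since Proposition~\ref{prop: finite cd} bounds the cohomological amplitude of $R\Gamma(\tilde X_{(x)}, -)$ by $2N$, the triangles $\tau^{\leq -n} M \to M \to \tau^{> -n} M$ show that the contribution of the tail $\tau^{\leq -n} M$ vanishes in any fixed target degree once $n$ is large enough, reducing to $M \in D^b(R)$. A further truncation reduces to a single $R$-module $N$ placed in a single degree. Second, writing $N = \operatorname{colim}_\alpha N_\alpha$ as a filtered colimit of finitely generated submodules and using that $R\Gamma$ of the Milnor tube commutes with filtered colimits of bounded complexes of constructible sheaves, I would reduce to $N$ finitely presented. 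Third, since $R$ is a regular noetherian local ring, every finitely presented $N$ admits a finite free resolution; dévissing through such a resolution reduces to $N = R$, where the map is the identity.

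The main obstacle is the second dévissage step: exchanging $R\Gamma$ of the Milnor tube with filtered colimits inside the pro-\'etale formalism. In the \'etale site this is standard given finite cohomological dimension and qcqs hypotheses, but in the pro-\'etale site filtered colimits are less well-behaved, and one must argue inside the framework of \cite{bhatt2013pro}, restricting to constructible sheaves and exploiting the controlled behavior of $R\lim$ and $\operatorname{colim}$ there. Once this stage is granted, the surrounding steps are formal.
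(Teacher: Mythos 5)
Your reduction to stalks and to cohomology of the Milnor tube agrees with the paper's opening move, but the d\'evissage in $M$ that follows has a genuine gap at exactly the step you flag, and it is not a detail that can be postponed. You want to write $N = \varinjlim_\alpha N_\alpha$ as a filtered colimit of finitely generated modules and exchange that colimit with $R\Gamma$ of the Milnor tube. In the pro-\'etale setting this is not a consequence of qcqs hypotheses plus finite cohomological dimension: the pro-\'etale tensor product $K \otimes_R \underline M$ is by definition $R\lim(K_n \otimes_{R/\mathfrak m^n} \underline{M_n})$, so your colimit in $M$ would have to pass not only through $R\Gamma$ but also through that $R\lim$, and controlling these two interchanges simultaneously is precisely what \cite[Lemma~6.5.11.(ii)]{bhatt2013pro} is for. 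In effect you are proposing to reprove that lemma for the Milnor-tube cohomology functor, and the reproof stalls at the same delicate point the original handles with care; nothing in your outline supplies a way past it.

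The paper avoids the d\'evissage entirely by a geometric rewrite you do not attempt. After reducing to the Milnor tube, it introduces the map $\tau \colon X_{(x)} \times_{S_{(s)}} S_{(t)} \to X_{(s)} \times_{S_{(s)}} S_{(t)}$ and uses proper base change to identify the Milnor-tube cohomology with $R\Gamma(X_t, (R\tau_* L)|_{X_t})$. Since $\tau$ has cohomological dimension $0$ and $R\Gamma(X_t,-)$ has cohomological dimension at most $2N$, the composite functor is covered directly by \cite[Lemma~6.5.11.(ii)]{bhatt2013pro}, and the proposition follows with no new d\'evissage. This push--pull along $\tau$ is the essential missing step in your plan: the structure morphism of the Milnor tube is not of finite type, so one cannot apply the Bhatt--Scholze lemma to it directly, and without the rewrite one is forced to redo the lemma's interchange argument from scratch --- which is exactly where your proposal gets stuck.
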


\begin{proof}
    It suffices to check this property on stalks, i.e.\ on the cohomology of the Milnor tube, where it becomes the map
    \begin{equation}
        R\Gamma(X_{(x)} \times_{S_{(s)}} S_{(t)}, K \otimes_R \underline{M}) \rightarrow R\lim R\Gamma(X_{(x)} \times_{S_{(s)}} S_{(t)}, K_n \otimes_{R/\mathfrak{m}^n} \underline{M_n}).
    \end{equation}
    Write $\tau \colon X_{(x)} \times_{S_{(s)}} S_{(t)} \rightarrow X_{(s)} \times_{S_{(s)}} S_{(t)}$.  Then for a sheaf $L$, we have
    \[ R\Gamma(X_{(x)} \times_{S_{(s)}} S_{(t)}, L) \simeq R\Gamma(X_{(s)} \times_{S_{(s)}} S_{(t)}, R\tau_* L) \simeq R\Gamma(X_t, (R\tau_* L)|_{X_t}) \]
    where the last isomorphism follows by proper base change \cite[Lemma~6.7.5]{bhatt2013pro}.  Now, $\tau$ has cohomological dimension $0$ and global sections of $X_t$ has cohomological dimension at most $2N$, so \cite[Lemma~6.5.11.(ii)]{bhatt2013pro} implies that the map
    \[
    R\Gamma(X_t, (R\tau_* R\lim (K_n \otimes_{R/\mathfrak{m}^n} \underline{M_n}))|_{X_t}) \rightarrow R\lim R\Gamma(X_t, (R\tau_* K_n \otimes_{R/\mathfrak{m}^n} \underline{M_n})|_{X_t})
    \]
    is an isomorphism.
\end{proof}

The most basic pathology of the definition of shredded nearby cycles is the failure of constructibility.  For example, let $S = \p^2$ over an algebraically closed base field, and let $X$ be the blow-up of $S$ at a point $0 \in \p^2$.  Let $K$ be the constant sheaf $\mathbb{Z} / n$ on $X$, and take our specialization $t \rightarrow s$ to be a specialization from a geometric point over the (two-dimensional) generic point of $S$ to the point $0$.  Then $R(\Psi_f)_t^s K$ is not constructible, as shown by Orgogozo \cite[Section~11]{orgogozo2006modifications}.  To correct this situation, the theory of nearby cycles over general bases considers the topoi $X$ and $S$ and uses the vanishing topos $X \atimes_S S$, which is a 2-categorical construction for topoi \cite[Expos\'e~11]{illusie2014travaux}.  The vanishing topos is equipped with a geometric morphism $\Psi \colon X \rightarrow X \atimes_S S$, and we may consider nearby cycles as pushforward along this geometric morphism $R\Psi_f = (\Psi_f)_*$.  The shredded nearby cycle above is then recovered by pulling back along $X_{s} \times_s t \rightarrow X \atimes_S S$.

For the vanishing topos, there is a natural base change property with respect to maps $S' \rightarrow S$ which leads to the concept of a pair $(f, K)$ being $\Psi$-good.  For $h \colon S' \rightarrow S$, let $X' = X \times_S S'$, $h' \colon X' \rightarrow X$, and $f' \colon X' \rightarrow S'$ be defined by pullback.  We may form the diagram \cite[Equation~1.4.2]{illusie2017around}
\[
    \begin{tikzcd}
        X' \arrow{r}{h'} \arrow{d}{\Psi_{f'}} & X \arrow{d}{\Psi_f} \\
        X' \atimes_{S'} S' \arrow{r}{h' \atimes_h h} & X \atimes_S S
    \end{tikzcd}
\]
together with a base change morphism $(h' \atimes_h h)^* R\Psi_f \rightarrow R\Psi_{f'} (h')^*$.
\begin{definition}
    We say that the pair $(f, K)$ is $\Psi$-good if for any map $h$, the base change morphism is an isomorphism.
\end{definition}

We remark that $\Psi$-goodness is automatic for a 1-dimensional base, either a smooth curve or a henselian discrete valuation ring.  We will use various properties of $\Psi$-goodness which have been proven for \'etale sheaves.

\begin{proposition}\label{prop: constructible}
Let $(f,K)$ be $\Psi$-good and let $K \in D^b_c(X,\bk)$ be constructible.  Let $\alpha \colon t \to s$ be a specialization in $S$.  Then $R(\Psi_f)_{t}^{s} K \in D^b_c(X_{s},\bk)$.
\end{proposition}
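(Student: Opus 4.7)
The plan is to reduce to the case of torsion coefficients in the \'etale topology, where the result is due to Orgogozo. When $\bk$ is a torsion coefficient ring, constructibility of $R(\Psi_f)_{t}^{s}K$ for $\Psi$-good pairs is the content of \cite[Th\'eor\`eme~2.1]{orgogozo2006modifications}: Orgogozo shows that after a sufficient modification of the base one always recovers constructibility, and the $\Psi$-goodness hypothesis was designed exactly so that no such modification is needed. One can give a stalkwise proof using the identification of stalks with cohomology of Milnor tubes together with devissage on $S$.

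For the adic case, let $R$ denote the ring of integers in $\bk$ and $\mathfrak{m} \subset R$ the maximal ideal. Applying Proposition~\ref{prop: inverse limit commutes} with $\underline{M} = R/\mathfrak{m}^n$ yields a natural isomorphism
\[ R(\Psi_f)_{t}^{s}(K) \lotimes_R \underline{R/\mathfrak{m}^n} \simeq R(\Psi_f)_{t}^{s}(K_n), \]
where $K_n := K \lotimes_R \underline{R/\mathfrak{m}^n}$. I would first argue that $(f,K_n)$ is itself $\Psi$-good: the base change morphism for $K_n$ is obtained from that for $K$ by tensoring with $\underline{R/\mathfrak{m}^n}$, applying Proposition~\ref{prop: inverse limit commutes} both for $f$ and for any pullback $f'$, so the isomorphism for $K$ descends to an isomorphism for $K_n$. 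Hence each $R(\Psi_f)_{t}^{s}(K_n)$ is constructible by the torsion case. Combined with the bounded amplitude supplied by Proposition~\ref{prop: finite cd}, the characterization of constructible adic pro-\'etale sheaves as derived $\mathfrak{m}$-adically complete sheaves with constructible mod-$\mathfrak{m}^n$ reductions (cf.\ \cite[Section~6.8]{bhatt2013pro}) then yields the required constructibility of $R(\Psi_f)_{t}^{s}K$.

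The main obstacle I anticipate is the technical verification that $\Psi$-goodness descends to the reductions $K_n$. While morally obvious, this requires carefully tracking the compatibility of the base change morphism in the vanishing topos with the derived tensor product $\lotimes_R \underline{R/\mathfrak{m}^n}$ in both the source and the pullback, and in particular applying Proposition~\ref{prop: inverse limit commutes} in a relative setting. The finite cohomological dimension result of Proposition~\ref{prop: finite cd} is essential for ensuring that these tensoring and inverse-limit operations interact correctly and that no pathology arises at the limit.
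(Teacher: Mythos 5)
Your proposal is essentially the same as the paper's proof: both reduce to the torsion case via Orgogozo's constructibility theorem (though the paper cites \cite[Theorem~8.1]{orgogozo2006modifications} for the $\Psi$-good statement rather than the modification theorem you reference) and then transfer to the pro-\'etale setting using Proposition~\ref{prop: inverse limit commutes} together with the Bhatt--Scholze characterization of constructible adic complexes by $\mathfrak{m}$-adic completeness plus constructibility of the mod-$\mathfrak{m}$ reduction. The descent of $\Psi$-goodness to the reductions $K_n$ that you flag as a potential technicality is likewise passed over without elaboration in the paper, so your argument matches in both substance and level of detail.
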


\begin{proof}
The constructibility is essentially due to Orgogozo and appears as \cite[Theorem~8.1]{orgogozo2006modifications} in the torsion coefficients case.

These results may be transferred to the pro-\'etale setting.  Constructibility for a complex $K$ of $R$-modules on $X_{\proet}$ as in \cite[Definition~6.5.1]{bhatt2013pro} involves an $\mathfrak{m}$-adic completeness and the property that $K \otimes R / \mathfrak{m}$ is obtained by pulling back along the geometric morphism $X_{\proet} \rightarrow X_{\et}$.  Completeness follows by identifying the inverse systems $\{ R(\Psi)_{t}^{s} (K) \otimes_R R / \mathfrak{m}^n \} \simeq \{ R(\Psi)_{t}^{s} (K \otimes_R R / \mathfrak{m}^n) \}$.  By applying Proposition~\ref{prop: inverse limit commutes} with $M = R / \mathfrak{m}$, $R(\Psi)_{t}^{s} (K \otimes_R R / \mathfrak{m}) = R(\Psi)_{t}^{s}(K) \otimes_R R / \mathfrak{m}$ is constructible by Orgogozo's theorem.  We conclude that $R(\Psi)_{t}^{s}$ preserves constructible pro-\'etale complexes analogous to \cite[Lemma~6.7.2]{bhatt2013pro}.
\end{proof}

We remark that $K$ is universally locally acyclic relative to $f$ if and only if the pair $(f, K)$ is $\Psi$-good and $K$ is locally acyclic relative to $f$ \cite[Example~1.7(b)]{illusie2017around} \cite[Proposition~2.7(2)]{saito2017characteristic}.

For a composition of specializations $s \rightarrow t \rightarrow u$, there is a natural map
\begin{equation}\label{eqn: psi factorizability definition} R(\Psi_f)_{s}^{u} K \rightarrow R(\Psi_f)_{t}^{u} R(\Psi_f)_{s}^{t} K. \end{equation}
Following \cite[Definition~2.7]{salmon2023restricted}, we say that a pair $(f, K)$ is $\Psi$-factorizable if $(f, K)$ is $\Psi$-good and the above map is an isomorphism for any such composition of specializations.  The motivation for the terminology of $\Psi$-factorizability was to unify two sorts of ``factorization structures'' appearing in the study of Satake and affine Hecke categories: the commutativity of the convolution product on Satake sheaves with the centrality of the central sheaf functor into various parahoric affine Hecke categories under a single construction.  Let $G$ be a parahoric group scheme over a curve $C$ over $\F$, and let $\Gr_{G, I}$ be the Beilinson-Drinfeld Grassmannian with $I = \{ 1,\dots,n \}$ legs \cite[Definition~2.2]{salmon2023unipotent} over the base
\[ \mathfrak{q}_I \colon \Gr_{G,I} \rightarrow C^I. \]
We note that if $|I| = 1$ and $G$ has parahoric ramification at a point $x$, then the corresponding fiber of the Beilinson-Drinfeld Grassmannian over $x$ is a partial affine flag variety.  Let $\{ x_1, \dots, x_m \} = N \subset C$ be the locus of parahoric ramification and $C \setminus N$ the unramified locus.  There are Satake sheaves $\mathcal{S}_{V_1} \widetilde{\boxtimes} \dots \widetilde{\boxtimes} \mathcal{S}_{V_n}$ over $\Gr_{G,I} \times_{C^I} (C \setminus N)^I$, and we can $!$-extend them to sheaves over $\Gr_{G,I}$ via
\[ j \colon \Gr_{G,I} \times_{C^I} (C \setminus N)^I \rightarrow \Gr_{G,I}. \]
The following proposition is then a special case of \cite[Proposition~3.14]{salmon2023restricted}, announced there in the case of $\qlbar$ coefficients but whose proof extends to general choices of $\bk$, including characteristic $\ell$ coefficients.
\begin{proposition}\label{prop: psi factorizability grassmannian}
    The pairs $(\mathfrak{q}_I, j_!(\mathcal{S}_{V_1} \widetilde{\boxtimes} \dots \widetilde{\boxtimes} \mathcal{S}_{V_n}))$ are $\Psi$-factorizable.
\end{proposition}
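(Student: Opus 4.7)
The plan is to apply \cite[Proposition~3.14]{salmon2023restricted}, whose argument reduces $\Psi$-factorizability to two ingredients: the factorization structure of the Beilinson-Drinfeld Grassmannian, together with Richarz's theorem that Satake sheaves on the unramified locus are universally locally acyclic \cite{richarz2014new}. The task is to sketch this strategy and explain why it extends from $\qlbar$ to arbitrary $\bk$, including characteristic $\ell$ coefficients.

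First, I would stratify the base $C^I$ by combinatorial type. For each surjection $\alpha \colon I \twoheadrightarrow J$, together with a decoration of $J$ recording whether each cluster lies in the ramification locus $N$ or in its complement, there is an associated locally closed stratum in $C^I$. Over each stratum, $\Gr_{G,I}$ factorizes as a product $\prod_{j \in J} \Gr_{G, \alpha^{-1}(j)}$ of smaller Beilinson-Drinfeld Grassmannians, parahoric on the ramified factors and unramified on the others, and the external product Satake sheaf $\mathcal{S}_{V_1} \widetilde{\boxtimes} \dots \widetilde{\boxtimes} \mathcal{S}_{V_n}$ decomposes compatibly as a box product of convolutions along this factorization.

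Next, I would establish $\Psi$-goodness. On the fully unramified open locus $U_I \subseteq C^I$, where all legs are distinct and disjoint from $N$, the sheaf is ULA by Richarz, hence $\Psi$-good there. Extending to all of $C^I$ proceeds by descending along the stratification above and checking base change compatibility stratum by stratum, using that the $!$-extension $j_!$ is controlled by the local structure of each stratum. For the factorization property itself, given a composition of specializations $s \to t \to u$ in $C^I$, I would identify both $R(\Psi_{\mathfrak{q}_I})_s^u K$ and $R(\Psi_{\mathfrak{q}_I})_t^u R(\Psi_{\mathfrak{q}_I})_s^t K$ with the fusion product of Satake sheaves attached to the clusters at $u$, using the factorization of $\Gr_{G,I}$ along both the intermediate and the final specializations and the compatibility of fusion with convolution.

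The main obstacle is the case in which the specialization collapses legs onto the ramification locus $N$, where Richarz's ULA result no longer applies directly and one must use the local geometry of the parahoric Beilinson-Drinfeld Grassmannian near $N$. Handling this requires showing that $j_!$ interacts correctly with the nested factorization and with taking nearby cycles, which is the technical heart of the cited result. Finally, the passage from $\qlbar$ to general coefficients is formal: the factorization of $\Gr_{G,I}$ is purely geometric, ULA is defined and preserved under the same operations for arbitrary $\bk$, and the pro-\'etale case reduces to torsion coefficients via Proposition~\ref{prop: inverse limit commutes} and Proposition~\ref{prop: constructible}.
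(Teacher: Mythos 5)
The paper does not actually supply a proof of this proposition: the text preceding the statement says it ``is then a special case of \cite[Proposition~3.14]{salmon2023restricted}, announced there in the case of $\qlbar$ coefficients but whose proof extends to general choices of $\bk$,'' and nothing further. So your task amounts to reconstructing what the cited argument does and justifying the coefficient upgrade, and in that respect your high-level outline (factorization structure of $\Gr_{G,I}$, Richarz's ULA theorem on the unramified locus, stratification of $C^I$ by collision type, treating the $j_!$-extension at $N$ as the technical core) is consistent with the paper's citation and with the surrounding discussion of Corollary~\ref{cor: maps agree}.

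There is, however, one step in your sketch that does not hold up as stated. You claim that $\Psi$-goodness is obtained by ``descending along the stratification above and checking base change compatibility stratum by stratum.'' But $\Psi$-goodness is a statement about \emph{arbitrary} base changes $h \colon S' \to C^I$, not only those adapted to a fixed stratification, and it is not a property one can check stratum by stratum in any naive way: a base change $S'$ can meet many strata, and the failure mode (as in Orgogozo's blow-up example recalled in the paper) happens precisely across stratum boundaries. The actual mechanism behind the cited result has to exploit the specific factorization geometry to reduce general base changes to the ULA situation plus classical one-dimensional nearby cycles along the successive coordinate degenerations, rather than appealing to the stratification as a sufficient condition. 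Similarly, identifying both $R(\Psi)_s^u K$ and $R(\Psi)_t^u R(\Psi)_s^t K$ ``with the fusion product of Satake sheaves attached to the clusters at $u$'' requires knowing in advance that $j_!$ commutes with the relevant nearby-cycle operations, which is the nontrivial content you relegate to ``the technical heart of the cited result''; as written, the factorizability claim is then circular unless this commutation is established first. Your final paragraph on upgrading coefficients is fine and matches the paper's reductions via Propositions~\ref{prop: inverse limit commutes} and~\ref{prop: constructible}.

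Since the paper itself offers no proof here, I cannot say your route diverges from the paper's; but the $\Psi$-goodness-by-stratification step is a genuine gap in the sketch, and the treatment of $j_!$ needs to come before, not after, the factorization identification.
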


On the other hand, Gabber and Abe have announced that $\Psi$-factorizability is already implied by $\Psi$-goodness.  The following result was announced as \cite[Theorem~4.5]{abe2022ramification} in the case of \'etale sheaves.
\begin{conjecture}\label{prop: factorizable}
    If $(f, K)$ is $\Psi$-good, then $(f, R(\Psi_f)_{u}^{t} K)$ is $\Psi$-good and $(f, K)$ is $\Psi$-factorizable.
\end{conjecture}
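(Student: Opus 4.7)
The plan is to prove both assertions by stalkwise analysis of Milnor tubes, reducing the factorization map to a Leray-type identification afforded by $\Psi$-goodness. Fix a composition of specializations $u \to t \to s$ in the base $S$, and a geometric point $x$ of $X_s$; the $x$-stalk of the natural map~\eqref{eqn: psi factorizability definition} (after relabeling) compares
\[ R\Gamma(X_{(x)} \times_{S_{(s)}} S_{(u)}, K) \longrightarrow \bigl(R(\Psi_f)_t^s \, R(\Psi_f)_u^t K\bigr)_x, \]
and the goal is to realize the right-hand side as cohomology of the same Milnor tube via a Leray spectral sequence for a natural projection. A preliminary reduction lets us assume $S$ is local at $t$: writing $S' = S_{(t)}$, $X' = X \times_S S'$, and $f', K'$ the respective pullbacks, $\Psi$-goodness of $(f, K)$ yields canonical identifications $R(\Psi_f)_u^t K \simeq R(\Psi_{f'})_u^t K'$ together with compatible outer identifications, so one may replace $(f, K)$ by $(f', K')$ throughout.

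The key step is to exploit the natural projection
\[ \pi \colon X_{(x)} \times_{S_{(s)}} S_{(u)} \longrightarrow X_{(x)} \times_{S_{(s)}} S_{(t)} \]
induced by the specialization $u \to t$. For each geometric point $(x', t)$ of the target with $x' \in X_t$, the fiber $\pi^{-1}(x', t)$ is the Milnor tube $X_{(x')} \times_{S_{(t)}} S_{(u)}$, whose cohomology with coefficients in $K$ computes $(R(\Psi_f)_u^t K)_{x'}$. A Leray spectral sequence for $\pi$, combined with the fact that the target of $\pi$ is itself the Milnor tube computing $R(\Psi_f)_t^s$ applied to the sheaf $R(\Psi_f)_u^t K$, then identifies the iterated nearby cycle with $R\Gamma$ of the original Milnor tube. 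Propositions~\ref{prop: inverse limit commutes} and~\ref{prop: constructible} handle the pro-\'etale refinements needed to pass limits and constructibility through this identification.

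For the $\Psi$-goodness of $(f, R(\Psi_f)_u^t K)$ --- interpreted as saying that nearby cycles of the intermediate sheaf behave correctly under any further base change over $S_{(t)}$ --- the argument becomes formal once factorizability is in hand: any base change of an iterated nearby cycle factors by commutativity of the vanishing topos formalism; each factor is an isomorphism by $\Psi$-goodness of $(f, K)$; and factorizability lets one recognize the composite as an iterated nearby cycle of the base-changed sheaf.

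The most delicate step will be the Leray identification: precisely matching the fibers of $\pi$ with the Milnor tubes computing the inner nearby cycle, and verifying that the spectral sequence sends the natural map~\eqref{eqn: psi factorizability definition} to an equality. Extracting $R(\Psi_f)_u^t K$ intrinsically as a sheaf living on the target of $\pi$, rather than only on $X_t$, is where the full strength of $\Psi$-goodness is needed; one expects here an approximation argument realizing the localized Milnor tubes as limits of finite-type scheme-theoretic data, so that the results of Orgogozo and Bhatt--Scholze apply after passing to the limit.
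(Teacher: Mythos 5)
This statement is not proved in the paper: it is stated as a \emph{Conjecture}, with the author noting only that the result ``was announced as \cite[Theorem~4.5]{abe2022ramification} in the case of \'etale sheaves'' and that for the pro-\'etale case ``we do not expect new ideas will be needed beyond the \'etale case.'' The paper then \emph{assumes} the conjecture for the remainder of the argument, rather than proving it, so there is no proof in the paper for you to match against.

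Turning to the substance of your proposal, the central step has a genuine gap that you yourself flag but do not close, and it is precisely the gap that makes this a hard, still-only-announced theorem. You introduce the projection
\[
\pi \colon X_{(x)} \times_{S_{(s)}} S_{(u)} \longrightarrow X_{(x)} \times_{S_{(s)}} S_{(t)}
\]
and assert that ``for each geometric point $(x',t)$ of the target with $x' \in X_t$, the fiber $\pi^{-1}(x',t)$ is the Milnor tube $X_{(x')} \times_{S_{(t)}} S_{(u)}$.'' This is not the case: the scheme-theoretic fiber of $\pi$ over a geometric point $x'$ is $(x') \times_{S_{(t)}} S_{(u)}$, which does not see the \'etale localization of $X$ at $x'$ at all. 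What actually computes the stalk $(R\pi_*K)_{x'}$ is cohomology of the localization of the \emph{source} of $\pi$ at (a lift of) $x'$, and identifying that localization with the Milnor tube $X_{(x')} \times_{S_{(t)}} S_{(u)}$ is not automatic --- $\pi$ is far from proper, so no base-change theorem hands this to you. Making that identification precise, uniformly enough in $x'$ and with the correct functoriality to match the natural map~\eqref{eqn: psi factorizability definition}, is exactly where the content of the theorem lies and where $\Psi$-goodness, oriented constructibility (Orgogozo), and the fine structure of the vanishing topos must enter. Your proposal defers this to ``an approximation argument realizing the localized Milnor tubes as limits of finite-type scheme-theoretic data,'' which is a plausible direction but is itself the theorem, not a step towards it. Likewise, the deduction of $\Psi$-goodness of $(f, R(\Psi_f)_u^t K)$ as ``formal once factorizability is in hand'' is circular in your outline, since your factorizability argument already presupposes being able to manipulate the intermediate nearby cycle as a sheaf over $S_{(t)}$ compatibly with further base change. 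In short: the strategy (reduce to Milnor tubes, decompose the tube along $\pi$, compare with iterated tubes) is a reasonable heuristic, but the proposal does not supply the technical core and so does not constitute a proof of the conjecture --- which, to be fair, the paper itself does not claim to have either.
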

For the remainder of the paper, if we will assume that the above holds for pro-\'etale sheaves, then all instances of $\Psi$-factorizability can be reduced to those of $\Psi$-goodness.  For the pro-\'etale case, we do not expect new ideas will be needed beyond the \'etale case.

\subsection{Nearby cycles over an affine space}

We now specialize to the case that $S$ is an affine space.  For a finite set $P$, let $\A^P$ be the affine space of maps $P \rightarrow \A^1$, as a contravariant functor in finite sets $P$.  For a finite set $P$, let $P_*$ denote the pointed finite set $P \cup \{*\}$ with $* \not \in P$, and let $\FinS$ be the category of finite sets and $\FinS_*$ be the category of pointed finite sets.  Let $f \colon X \to \A^{P} = S$ be a separated scheme of finite type.  We may sometimes consider $f$ as defining a map over the base $\A^{P_*}$ under the identification $\A^P \cong \A^P \times \overline{0} \rightarrow \A^{P_*}$ where $\overline{0}$ is the obvious geometric point over the closed $\mathbb{F}$-point $0 \in \A^{\{*\}}$ (recalling that our base field is algebraically closed).

For a map of pointed finite sets, there is a generalized diagonal $\Delta_{\alpha} \colon \A^{Q_*} \to \A^{P_*}$.  By choosing a geometric point $\overline{\eta_Q}$ over the generic point $\eta^Q$ of $\A^Q$, we produce a geometric point of $\A^{P_*}$.  A map of pointed finite sets factors as the composition of a surjective and injective map, and we will mostly consider the case when $\alpha$ is surjective, remarking that in the case that $\alpha$ is injective, this geometric point would map to a geometric generic point over $\A^P$ as well.  More generally, if $t \rightarrow S_{(s)}$ is a specialization coming from a map of geometric points $t \rightarrow s$, the corresponding nearby cycles functor $R(\Psi)_{t}^{s}$ is simply pullback along the fiber $X_t \rightarrow X_s$.

In the case that $\alpha$ is surjective, we now specialize the theory of nearby cycles over general bases in relation to our setting where $S = \A^P$.  Let $\FinSs_*$ be the category of pointed finite sets and \emph{surjective} maps.  Let $\overline{\eta_P}$ be a geometric point over the Zariski-dense point $\eta^P \in \A^{P}$, and let $\overline{0}$ be a $\mathbb{F}$-valued geometric point over $0 \in \A^1 = \A^{\{*\}}$.  Let $\overline{\eta_P} \times \overline{0}$ be the geometric point in $\A^{P_*}$.  For every surjective map of pointed finite sets $P_* \to Q_*$, this defines a geometric point $\Delta_{\alpha}(\overline{\eta_Q} \times \overline{0})$ in $\A^{P_*}$ and we can compatibly choose specializations $\overline{\eta_P} \times \overline{0} \to \Delta_{\alpha}(\overline{\eta_Q} \times \overline{0})$ in $\A^{P_*}$ so they are compatible with composition in $\FinSs_*$ and therefore define a functor from the under category $P_* / \FinSs_*$ to the category of geometric points and specializations in the \'etale topos of $\A^{P_*}$.  Denote this functor by $F$, so $F(\alpha)$ is the specialization in the \'etale topos.

A specialization map $s \rightarrow t$ in the \'etale topos of $S$ determines a map on the fibers of $X$ over \'etale neighborhoods of $S$.  Letting $X_{(s)}$ be the fiber over the \'etale localization $S_{(s)}$ of $s$, then the specialization defines a map $X_{(s)} \to X_{(t)}$.  In the case of a geometric point $\eta$ over a generic point, $X_\eta = X_{(\eta)}$.  Let $j_{\alpha}$ be the map $X_{\overline{\eta_P} \times \overline{0}} \rightarrow X_{(\Delta_{\alpha}(\overline{\eta_Q} \times \overline{0}))}$, and define nearby cycles with respect to the specialization $F(\alpha)$ with the notation
\begin{equation}
    R(\Psi_f)_{F(\alpha)} K := R(\Psi_f)_{\overline{\eta_P} \times \overline{0}}^{\Delta_{\alpha}(\overline{\eta_Q} \times \overline{0})} = i_{\alpha}^* j_{\alpha,*} K.
\end{equation}
We may choose geometric points to be compatible with composition with respect to maps out of $P_*$.  In particular, there is an adjunction map
\begin{equation}
    R(\Psi_f)_{F(\beta \alpha)} K \rightarrow R(\Psi_f)_{\Delta_{\alpha}(\overline{\eta_Q} \times \overline{0})}^{\Delta_{\beta \alpha}(\overline{\eta_R} \times \overline{0})} R(\Psi_f)_{\overline{\eta_P} \times \overline{0}}^{\Delta_{\alpha}(\overline{\eta_Q} \times \overline{0})} K
\end{equation}
coming from \cite[Construction~2.6]{salmon2023restricted}.

We say that a specialization $F(\alpha)$ is one-dimensional if the corresponding surjective morphism of finite sets $\alpha \colon P_* \rightarrow Q_*$ satisfies $|P| - |Q| = 1$.  For a one-dimensional specialization, we must have $|\alpha^{-1}(q)| = 2$ for a unique $q \in Q$ or $|\alpha^{-1}(*)| = 2$.  If $F(\alpha)$ is one-dimensional, then $\Delta_{\alpha}(\overline{\eta_Q} \times \overline{0})$ is a geometric generic point over the generic point of a divisor $D$ of $\A^P$.  As a result, the \'etale localization $S_{(\Delta_{\alpha}(\overline{\eta_Q} \times \overline{0}))}$ is a henselian trait and the corresponding nearby cycles are classical one-dimensional nearby cycles (without shift).  It is a classical fact, due to Gabber, that for one-dimensional $F(\alpha)$, the functor $K \mapsto R(\Psi)_{F(\alpha)}(K)[-1]$ is t-exact for the perverse t-structure.

\begin{proposition}\label{prop: perverse}
    Let $f \colon X \to \A^P = S$ be the structure map, separated of finite type, and assume that the pair $(f, K)$ is $\Psi$-factorizable with $K$ a perverse sheaf.  For any surjection $\alpha \colon P_* \to Q_*$, the object $R(\Psi)_{F(\alpha)} K[|Q|-|P|]$ is a perverse sheaf.
\end{proposition}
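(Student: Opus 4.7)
\emph{Proof plan.} The strategy is induction on $n := |P|-|Q|$, aiming to reduce to the one-dimensional case, where t-exactness after a shift by $-1$ is Gabber's classical theorem recalled just before the statement. The case $n=0$ is immediate: $\alpha$ is a bijection, $F(\alpha)$ is essentially the identity specialization, and $R(\Psi_f)_{F(\alpha)}K\simeq K$ is already perverse with no shift. The case $n=1$ is exactly Gabber's theorem.

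For $n\geq 2$, factor $\alpha$ as a composition of one-dimensional surjections $\alpha=\alpha_n\circ\cdots\circ\alpha_1$ with $\alpha_i\colon P_{i-1,*}\to P_{i,*}$ satisfying $|P_{i-1}|-|P_i|=1$, $P_0=P$, and $P_n=Q$. Choosing geometric points compatibly via $F$, one obtains a chain of specializations $s_0\to s_1\to\cdots\to s_n$ in $\A^{P_*}$, with $s_0=\overline{\eta_P}\times\overline 0$ and $s_i=\Delta_{\alpha_i\circ\cdots\circ\alpha_1}(\overline{\eta_{P_i}}\times\overline 0)$. Iterating the three-point $\Psi$-factorizability of $(f,K)$---peeling off one factor at a time, applying the hypothesis to $(f,K)$ for the triples $s_0\to s_{n-1}\to s_n$, then $s_0\to s_{n-2}\to s_{n-1}$, and so on---yields
\[
    R(\Psi_f)_{F(\alpha)}K \;\simeq\; R(\Psi_f)_{s_{n-1}}^{s_n}\circ\cdots\circ R(\Psi_f)_{s_0}^{s_1}K.
\]
Each intermediate specialization $s_{i-1}\to s_i$ is one-dimensional in the sense of the excerpt: the image of $s_{i-1}$ in the \'etale localization $S_{(s_i)}$ is the generic point of a divisor, so after base change along a henselian trait through $s_i$ transverse to the codimension-$i$ stratum, $R(\Psi_f)_{s_{i-1}}^{s_i}[-1]$ reduces to a classical one-dimensional nearby cycle and is therefore perverse t-exact by Gabber. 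Applying this $n$ times to the perverse sheaf $K$ gives perversity of $R(\Psi_f)_{F(\alpha)}K[-n]=R(\Psi_f)_{F(\alpha)}K[|Q|-|P|]$.

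The principal obstacle is justifying the classical-one-dimensional reduction at each intermediate step, where the ambient base $S_{(s_i)}$ has dimension $i>1$ rather than being a trait. One needs that $R(\Psi_f)_{s_{i-1}}^{s_i}$ commutes with base change along the transverse henselian trait, which requires the relevant input sheaf to be $\Psi$-good. This is not handed to us directly by the $\Psi$-factorizability of $(f,K)$, so one either invokes Conjecture~\ref{prop: factorizable} (once available in the pro-\'etale setting) or argues directly from the affine-space geometry of $\A^{P_*}$, where the strata $\Delta_\beta$ intersect transversally and the reduction to a trait can be made explicit. A secondary, easily handled point is functoriality: peeling off successive three-point factorizations only requires applying functors $R(\Psi_f)_{s_{i}}^{s_{i+1}}$ to isomorphisms, so only the $\Psi$-factorizability of the fixed pair $(f,K)$ is needed, not of the intermediate sheaves in their own right.
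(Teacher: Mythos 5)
Your proof follows the same overall strategy as the paper: factor the specialization into a chain of one-dimensional specializations, iterate $\Psi$-factorizability of $(f,K)$ to rewrite $R(\Psi_f)_{F(\alpha)}K$ as a composite, then quote Gabber's perverse t-exactness for each one-dimensional factor. Your secondary remark---that only $\Psi$-factorizability of the fixed pair $(f,K)$ is needed, since one only applies $R(\Psi_f)_{s_i}^{s_{i+1}}$ to isomorphisms---is correct and worth keeping.

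However, you misdiagnose the ``principal obstacle.'' You suggest that turning each intermediate factor $R(\Psi_f)_{s_{i-1}}^{s_i}$ into a classical one-dimensional nearby cycle requires a base change along a transverse henselian trait, and that this base change forces either Conjecture~\ref{prop: factorizable} or an ad hoc $\Psi$-goodness argument for the intermediate sheaves. Neither is needed. The relevant observation (spelled out in the proof of Proposition~\ref{prop: galois action} and implicitly invoked here) is that the specialization $s_{i-1}\rightarrow S_{(s_i)}$ factors through the strict henselization $\A^{Q_{i+1}}_{(s_i)}$ of a \emph{closed} linear subspace $\A^{Q_{i+1}}\hookrightarrow \A^P$ containing both $s_{i-1}$ and $s_i$. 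Writing $j=\iota\circ j'$ with $\iota\colon X\times_{\A^P}\A^{Q_{i+1}}_{(s_i)}\hookrightarrow X_{(s_i)}$ a closed embedding, one has $i^*j_* \simeq (i')^* (j')_*$ because $\iota^*\iota_*\simeq\mathrm{id}$; thus the shredded nearby cycle computed over $\A^P$ literally \emph{equals} the one computed over $\A^{Q_{i+1}}$, with no base change map to compare. Since $\A^{Q_{i+1}}_{(s_i)}$ is a henselian trait, this is the classical one-dimensional functor, which is perverse t-exact after shift by $-1$. So the step you flag as requiring extra input is in fact elementary, and the proof closes without appeal to Conjecture~\ref{prop: factorizable} or to $\Psi$-goodness of intermediate sheaves.
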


\begin{proof}
    For a surjective map $\alpha \colon P_* \rightarrow Q_*$, we may write $F(\alpha)$ as a composition $F(\alpha_1) \circ \dots \circ F(\alpha_n)$ of one-dimensional specializations, where $n = |P| - |Q|$.  By inducting on $n$, the $\Psi$-factorizability of $(f, K)$ implies that the map
    \[ R(\Psi)_{F(\alpha)} K \rightarrow R(\Psi)_{F(\alpha_1)} \cdots R(\Psi)_{F(\alpha_n)} K \]
    is an isomorphism.  On the other hand, the right side is a composition of classical one-dimensional nearby cycles, so the result follows by the t-exactness of the one-dimensional nearby cycle functor.
\end{proof}

Every map $\alpha \colon P_* \rightarrow Q_*$ admits a unique factorization $\alpha^{\Delta} \circ \alpha^*$.  Here, $\alpha^* \colon P_* \rightarrow (P \setminus \alpha^{-1}(*))_*$ is defined by
\[
\alpha^*(i) = \begin{cases} i & i \notin \alpha^{-1}(*) \\ * & \text{ otherwise,} \end{cases}
\]
and $\alpha^{\Delta} \colon (P \setminus \alpha^{-1}(*))_* \rightarrow Q_*$ is defined by
\[
\alpha^{\Delta}(i) = \begin{cases} \alpha(i) & i \ne * \\ * & i = * \end{cases}
\]
In the case $\alpha = \alpha^*$, we wish to perform a deeper study of the Galois action.  The Galois group $\pi_1(\eta^P, \overline{\eta_P})$ acts on $R(\Psi)_{F(\alpha)} K$ by automorphisms of the geometric generic point.  There is a surjective map
\[
\pi_1(\eta^P, \overline{\eta_P}) \rightarrow \pi_1(\eta, \overline{\eta})^P
\]
which is not in general an isomorphism.  That is, for $p \in P$, the projection to $\pi_1(\eta, \overline{\eta})$ arises from mapping the automorphism of $\overline{\eta_P}$ to the automorphism of $\overline{\eta}$ induced by the projection along $\A^P \rightarrow \A^1$.

\begin{proposition}\label{prop: galois action}
    Let $f \colon X \to \A^P = S$ be the structure map, separated of finite type, and assume that the pair $(f, K)$ is $\Psi$-factorizable with $K$ a perverse sheaf.  For any surjection $\alpha \colon P_* \to Q_*$ that is a bijection outside $\alpha^{-1}(*) \cap P = \{ p_1, \dots, p_n \}$, there is an isomorphism
    \[ R(\Psi)_{F(\alpha)} K \rightarrow R(\Psi)_{p_1} \cdots R(\Psi)_{p_n} K, \]
    where $R(\Psi)_{p_i}$ is a one-dimensional nearby cycle with respect to the projection $\A^P \rightarrow \A^1$ projecting to the $p_i$ copy of $\A^1$.
    
    Moreover, this map on nearby cycles is equivariant with respect to the Galois action of $\pi_1(\eta^P, \overline{\eta_P})$ on the left mapping to the action of $\pi_1(\eta, \overline{\eta})^{\{ p_1, \dots, p_n\}}$ on the right.
\end{proposition}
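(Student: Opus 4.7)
The plan is to reduce to the factorization argument used in the proof of Proposition~\ref{prop: perverse} and then identify each one-dimensional shredded nearby cycle $R(\Psi_f)_{F(\alpha_i)}$ with a nearby cycle along the corresponding coordinate projection, tracking the Galois actions throughout.

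First, I would factor $\alpha$ into one-dimensional surjections: since $\alpha$ collapses $\{p_1, \dots, p_n\}$ to $*$ and is a bijection elsewhere, write $\alpha = \alpha_n \circ \cdots \circ \alpha_1$ where $\alpha_i$ collapses only the single element $p_i$ (so each $\alpha_i$ is one-dimensional). By $\Psi$-factorizability, as in the proof of Proposition~\ref{prop: perverse}, we obtain
\[ R(\Psi_f)_{F(\alpha)} K \simeq R(\Psi_f)_{F(\alpha_n)} \cdots R(\Psi_f)_{F(\alpha_1)} K. \]
This reduces the task to identifying each one-dimensional factor with the corresponding $R(\Psi_f)_{p_i}$.

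Next, for each $i$, I would identify $R(\Psi_f)_{F(\alpha_i)}$ with the appropriate restriction of $R(\Psi_f)_{p_i}$. Set $P_i = P \setminus \{p_1, \dots, p_{i-1}\}$. The specialization $F(\alpha_i)$ goes from the generic point of $\A^{P_i}$ to the generic point of the hyperplane $\{x_{p_i} = 0\}$; its \'etale localization is a henselian trait in the $p_i$-direction sitting above the generic point of $\A^{P_i \setminus \{p_i\}}$. This trait is naturally the pullback of the henselization of $\A^1_{p_i}$ at $0$ along the coordinate projection $\pi_{p_i}$. Because $(f,K)$ is $\Psi$-good, nearby cycles commute with base change along $\pi_{p_i}$, so the shredded nearby cycle $R(\Psi_f)_{F(\alpha_i)}$ agrees with the restriction of the projection nearby cycle $R(\Psi_f)_{p_i}$ to the generic fiber of the remaining coordinates. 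Iterating and using $\Psi$-factorizability at each subsequent step to ensure base change continues to hold, we obtain the claimed isomorphism
\[ R(\Psi_f)_{F(\alpha)} K \simeq R(\Psi_f)_{p_1} \cdots R(\Psi_f)_{p_n} K. \]

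For the Galois equivariance, the key point is that each identification at step $i$ is a base change isomorphism, hence functorial in automorphisms of the geometric generic point. Thus the action of $\pi_1(\eta^{P_i}, \overline{\eta_{P_i}})$ on $R(\Psi_f)_{F(\alpha_i)}$ factors through the map to $\pi_1(\eta, \overline{\eta})$ induced by the projection to the $p_i$-th $\A^1$, and agrees with the classical Galois action on the one-dimensional $R(\Psi_f)_{p_i}$. Composing the $n$ steps, the $\pi_1(\eta^P, \overline{\eta_P})$-action on the left-hand side factors through the product of coordinate projections, producing the action of $\pi_1(\eta, \overline{\eta})^{\{p_1,\dots,p_n\}}$ on the right-hand side, exactly as claimed.

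The main obstacle I anticipate is making precise the base change identification in the second paragraph: one must confirm that the henselization of $\A^{P_i}$ at the generic point of $\{x_{p_i} = 0\}$ is naturally the base change of the henselization of $\A^1_{p_i}$ at $0$ via $\pi_{p_i}$, restricted to the generic fiber of the other coordinates, and that $\Psi$-goodness upgrades the base change map to an isomorphism of nearby cycles (rather than merely a morphism). Once this is established, the Galois statement follows essentially formally from functoriality.
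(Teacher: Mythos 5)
Your overall strategy -- factor $\alpha$ into one-dimensional surjections, invoke $\Psi$-factorizability to split $R(\Psi)_{F(\alpha)}$ into an iterated composition, then identify each factor with a coordinate-projection nearby cycle and track Galois actions -- is exactly the skeleton of the paper's proof, and the first (factorization) and third (Galois) steps are essentially correct as you describe them.

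The second step is where the two arguments diverge, and where your proposal has a real gap. You claim that the strict henselization of $\A^{P_i}$ at the geometric generic point of $\{x_{p_i}=0\}$ is ``naturally the pullback of the henselization of $\A^1_{p_i}$ at $0$ along $\pi_{p_i}$''; this is not so. The fiber product $\A^1_{(\overline{0})} \times_{\A^1} \A^{P_i}$ is not local (let alone henselian), and re-henselizing it at a point destroys the pullback description, so the trait is not presented as a base change. You then appeal to ``$\Psi$-goodness of $(f,K)$'' to get base change ``along $\pi_{p_i}$'', but $\Psi$-goodness for $f\colon X \to \A^P$ concerns base changes $S'\to\A^P$ of the \emph{target}, whereas $\pi_{p_i}\colon\A^P\to\A^1$ is a projection out of the target, so that hypothesis doesn't apply in the direction you need. (The quantity you would really want to invoke is $\Psi$-goodness of the composite $X\to\A^1_{p_i}$, which is automatic over a one-dimensional base, but even there the relevant fiber products $X\times_{\A^1}(\cdot)$ and $X\times_{\A^{P_i}}(\cdot)$ are genuinely different schemes, so the identification does not fall out of a single base-change assertion.)

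The paper sidesteps all of this. It observes that the specialization $\overline{\eta_{Q_{i+1}}}\times\overline{0}\to\A^P_{(\Delta_{\beta_i}(\cdots))}$ factors through the \emph{closed embedding} $\A^{Q_{i+1}}_{(\Delta_{\alpha_i}(\cdots))}\hookrightarrow\A^P_{(\Delta_{\beta_i}(\cdots))}$. Because $i^*i_* \simeq \mathrm{id}$ for a closed embedding, the shredded nearby cycle over $\A^P$ is literally equal to the shredded nearby cycle over $\A^{Q_{i+1}}$, with no $\Psi$-goodness needed. That localization $\A^{Q_{i+1}}_{(\Delta_{\alpha_i}(\cdots))}$ is a henselian discrete valuation ring, and the projection $\pi_{p_i}$ maps it to $\A^1_{(\overline{0})}$; the remaining comparison is then the classical fact that one-dimensional nearby cycles over a henselian trait are insensitive to the choice of geometric points over the generic and special point. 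So the key ingredient is the closed-embedding reduction, not a base-change claim along $\pi_{p_i}$. You flagged exactly this spot as the ``main obstacle'' and your instinct was right -- but the fix is the closed-embedding factorization rather than the pullback-of-traits argument you sketched.

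One minor bookkeeping remark: you write $\alpha=\alpha_n\circ\cdots\circ\alpha_1$ (so $\alpha_1$, collapsing $p_1$, is applied first) and then claim the output is $R(\Psi)_{p_1}\cdots R(\Psi)_{p_n}K$; as written the indices of the iterated nearby cycles come out in the opposite order. This is harmless given the ultimate symmetry, but it is worth keeping the ordering coherent, since the proposition is precisely what \emph{establishes} that symmetry.
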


\begin{proof}
    Write $\alpha = \alpha_{p_1} \circ \dots \circ \alpha_{p_n}$ where $\alpha_{p_i} \colon P_* \setminus \{ p_{i+1}, \dots, p_n \} \rightarrow P_* \setminus \{ p_{i}, \dots, p_n \}$ defined by
    \[ \alpha_{p_i}(p) = \begin{cases} * & p = p_i \\ p & \text{ otherwise.} \end{cases} \]
    By inducting on $\Psi$-factorizability, we may conclude that the map
    \[ R(\Psi)_{F(\alpha)} K \rightarrow R(\Psi)_{F(\alpha_{p_1})} \cdots R(\Psi)_{F(\alpha_{p_n})} K \]
    is an isomorphism.  We next argue that each nearby cycle $R(\Psi)_{F(\alpha_{p_i})}$ is a one-dimensional nearby cycle $R(\Psi)_{p_i}$.  Let $Q_i = P \setminus \{ p_{i}, \dots, p_n \}$ and let $\beta_i = \alpha_{i} \circ \dots \circ \alpha_n \colon P_* \rightarrow (Q_i)_*$ with $\beta_n = \alpha_n$ and $\beta_1$ equal to the map $\alpha$.  The specialization $F(\alpha_{p_i})$ can be viewed as a specialization
    \[ \Delta_{\beta_{i+1}}(\overline{\eta_{Q_{i+1}}} \times \overline{0}) \rightarrow \Delta_{\beta_i}(\overline{\eta_{Q_i}} \times \overline{0}) \]
    in the \'etale topos of $S$.  We note that the specialization map
    \[ \overline{\eta_{Q_{i+1}}} \times \overline{0} \rightarrow \A^P_{(\Delta_{\beta_{i}}(\overline{\eta_{Q_{i}}} \times \overline{0}))} \]
    factors as the composition of
    \[ \overline{\eta_{Q_i}} \times \overline{0} \rightarrow \A^{Q_{i+1}}_{(\Delta_{\alpha_i}(\overline{\eta_{Q_{i}}} \times \overline{0}))} \rightarrow \A^P_{(\Delta_{\beta_{i}}(\overline{\eta_{Q_{i}}} \times \overline{0}))} \]
    where the second map is a closed embedding.  Therefore, the sheaf $R(\Psi)_{F(\alpha_i)}$ can be viewed as nearby cycles for the first specialization in the composition above, viewed as a specialization in the \'etale topos of $\A^{Q_{i+1}}$.  On the other hand, $\A^{Q_{i+1}}_{(\Delta_{\alpha_i}(\overline{\eta_{Q_{i}}} \times \overline{0}))}$ is a henselian discrete valuation ring, and projection to the $p_i$ coordinate gives a map
    \[ \A^{Q_{i+1}}_{(\Delta_{\alpha_i}(\overline{\eta_{Q_{i}}} \times \overline{0}))} \rightarrow \A^1_{(\overline{0})} \]
    so this specialization can be viewed as the specialization for nearby cycles on the localization of $\overline{0}$ in $\A^1$.  The identification in the first part now follows by noting that classical nearby cycles are independent of the choice of geometric points over the generic and special points of a henselian trait.  The action of local inertia on classical nearby cycles comes from the action of the Galois group of the geometric generic point via projection.
\end{proof}

As a consequence of the Galois equivariance in the previous proposition, the action of $\pi_1(\eta, \overline{\eta})^{\{ p_1, \dots, p_n\}}$ on $R(\Psi)_{F(\alpha)} K$ is independent of the choice of decomposition $\alpha = \alpha_{p_1} \circ \dots \circ \alpha_{p_n}$.

\section{Relation between unipotent nearby cycles and nearby cycles via the vanishing topos}

Since we primarily work with the perverse t-structure on the derived category of constructible sheaves, we will often drop the derived functor notation and write instead $j_* = Rj_*$, $(\Psi_f)_{F(\alpha)} = R(\Psi_f)_{F(\alpha)}$, etc.  As a rule, all functors will be derived in the derived category of constructible sheaves and perverse cohomologies will be noted explicitly when taken.

\subsection{The definition of Achar and Riche}

For a map $\alpha \colon P_* \to Q_*$ in $\FinS_*$, and $K$ a perverse sheaf on $X$, Achar-Riche have conditionally defined an object $\Upsilon^{\alpha}_f(K)$ \cite[Definition~2.3]{achar2023higher}.  We recall their definition of $\Upsilon^{\alpha}_f(K)$.  This diagonal $\Delta_{\alpha}$ induces a pullback map
\[\bi_{X,\alpha} \colon X \times_{\A^{P} \times \overline{0}} \A^Q \times \overline{0} \rightarrow X.\]
We often suppress the $X$ where the structure map $f \colon X \rightarrow \A^P$ is understood.  Let $j$ be the open embedding of $X \times_{\A^P} (\G_m)^P \rightarrow X$.  More generally, we write $\bj_{X}$ to specify the dependence of $j$ on the scheme $X$ over $\A^P$.

On these affine spaces we can define extensions of certain unipotent local systems, and we recall the setup of \cite[Section~2.5]{achar2023higher}.  First, on $\G_m = \A^1 \setminus \{ 0 \}$, there are unipotent local systems $\mathscr{L}_a$, indecomposable of rank $a$.  These are unique up to isomorphism, $\mathscr{L}_1$ is the constant local system, and for $a \le b$, there is an exact sequence
\begin{equation*}
    0 \rightarrow \mathscr{L}_a \rightarrow \mathscr{L}_b \rightarrow \mathscr{L}_{b-a} \rightarrow 0.
\end{equation*}
These local systems $\mathscr{L}_a$ come from representations of the tame inertia group $\Zhat' = \prod_{\ell \ne p} \Z_{\ell}$ that send a topological generator to an indecomposable $a \times a$ Jordan block.  We call these representations $L_a$.

For a map $\ba \colon P = \{ p_1, \dots, p_n \} \rightarrow \Z_{\ge 1}$, we form a local system $\mathscr{L}_{\ba}$ on $\G_m^P$ as an external product
\[ \mathscr{L}_{\ba} = \mathscr{L}_{\ba(p_1)} \boxtimes \dots \boxtimes \mathscr{L}_{\ba(p_n)}. \]
Under the map $f$, we may pull this back to a local system on $X$.  Similarly, we form representations
\[ L_{\ba} = L_{\ba(p_1)} \boxtimes \dots \boxtimes L_{\ba(p_n)}. \]
The set of such maps $\ba$ form a poset, and $\mathscr{L}_{\ba}$ and $L_{\ba}$ are functors from this poset into local systems on $\G_m^P$ and representations of $\Z^P$, respectively.

\begin{definition}
    Let $K$ be a perverse sheaf on $X \times_{\A^P} \G_m^P, \bk$.  Let $\Upsilon^{\alpha}_f(K) \in D^b(X_s, \bk)$ be the pro-\'etale complex of sheaves
    \begin{equation}
        \varinjlim_{\ba \colon \alpha^{-1}(*) \cap P \rightarrow \Z_{\ge 1}} \bi_{\alpha}^* j_{!*} ( K \otimes f^* \mathscr{L}_{\ba} )[|Q|-|P|]
    \end{equation}
    where the limit is taken over the poset of maps $\alpha^{-1}(*) \cap P \rightarrow \Z_{\ge 1}$.  For $K \in D^b_c(X, \bk)$, we may also write $\Upsilon^{\alpha}_f(K)$ where we implicitly restrict to $X \times_{\A^P} \G_m^P$.
\end{definition}

Later, we will relate the functorial description above to Achar and Riche's original definition.  If $K$ is perverse, Achar and Riche define $\Upsilon^{\alpha}_f(K)$ as
\begin{equation}
    \varinjlim_{\ba \colon \alpha^{-1}(*) \cap P \to \Z_{\ge 1}} {}^{p} \scH^{|Q|-|P|} \bi_{\alpha}^* j_* ( K \otimes f^* \mathscr{L}_{\ba} ),
\end{equation}
conditional on two conditions, the first of which stating that the limit stabilizes for $\ba$ sufficiently large and the second stating that the maps outside perverse degree $|Q|-|P|$ vanish for $\ba$ sufficiently large.  We will see later in Corollary~\ref{cor: unipotent general bases ula} that if $K$ is perverse and $(f, K)$ is $\Psi$-good and universally locally acyclic over $\G_m^P$, then these definition agree in the sense that $\Upsilon^{\alpha}_f(K)$ is a constructible sheaf and its shift by $|Q|-|P|$ is perverse.

\subsection{Tame and unipotent nearby cycles via the vanishing topos}

We keep the notation of the previous section.  In particular, $\alpha$ will always be a surjective map of pointed finite sets that factors as $\alpha^{\Delta} \circ \alpha^*$ where $\alpha^*$ is a bijection outside the preimage of $*$.  We will often handle maps $\alpha$ that induce such bijectins separately and denote this condition as $\alpha = \alpha^*$.

For the localization of $\A^1$ at $0$, the corresponding Galois group $\pi_1(\eta, \overline{\eta})$ is the local inertia, and it has a distinguished quotient $\Z'$, where $\Z' = \prod_{\ell \ne p} \Z_\ell$ is the Galois group of the inverse limit of Kummer coverings $\widetilde{\G_m} \rightarrow \G_m$.  We will choose a topological generator of $\Z'$ along each coordinate $p \in P$, which we denote $T_p$.  Let $W$ denote the wild inertia, the kernel of the map $\pi_1(\eta, \overline{\eta}) \rightarrow \Z'$.  The previous proposition shows that if $\alpha = \alpha^*$ and if $(f, K)$ is $\Psi$-factorizable with $K$ a perverse sheaf, we may consider $R(\Psi)_{F(\alpha)} K$ as a perverse sheaf equipped with an action of $\pi_1(\eta, \overline{\eta})^{\alpha^{-1}(*) \cap P}$.  Since $W^{\alpha^{-1}(*) \cap P}$ is a pro-p group, it is meaningful to take $W^{\alpha^{-1}(*) \cap P}$-invariants as a perverse sheaf.

\begin{definition}
Let $\alpha = \alpha^*$ be a surjective map $P_* \rightarrow Q_*$.  If $(f, K)$ is $\Psi$-factorizable and $K$ is perverse on $X$, we define tame nearby cycles as the perverse sheaf
\begin{equation}
(\Psi^t_f)_{F(\alpha)} K = ((\Psi_f)_{F(\alpha)} K)^{W^{\alpha^{-1}(*) \cap P}}.
\end{equation}
The action of $\pi_1(\eta, \overline{\eta})^{\alpha^{-1}(*) \cap P}$ further factors through the quotient $(\Z')^{\alpha^{-1}(*) \cap P}$ on this subobject.  We define unipotent nearby cycles as the subobject of $(\Psi^t_f)_{F(\alpha)} K$ for which $T_p$ act unipotently for all $p \in \alpha^{-1}(*) \cap P$.  To be more explicit, the space of endomorphisms of a perverse sheaf is finite-dimensional.  Therefore, we may consider $(\Psi_f^t)_{F(\alpha)} K$ as a module over $\bk[T_{p_1}, \dots, T_{p_n}]$.  Analogous to \cite[Lemma~1.1]{reich2010notes}, \cite[Proposition~1.1]{morel2018beilinson} which deals with the case of a one-dimensional base, this submodule decomposes into a part supported over $(1, \dots, 1) \in \Spec(\bk[T_1, \dots, T_n])$, which we denote $(\Psi_f^u)_{F(\alpha)} K$, and a part supported away from this point, which we denote $(\Psi_f^{\ne 1})_{F(\alpha)} K$, yielding a canonical direct sum decomposition into generalized eigenspaces
\begin{equation}
    (\Psi_f^t)_{F(\alpha)} K = (\Psi_f^u)_{F(\alpha)} K \oplus (\Psi_f^{\ne 1})_{F(\alpha)} K.
\end{equation}
\end{definition}

As a consequence of Proposition~\ref{prop: galois action}, for $\alpha^{-1}(*) \cap P = \{ p_1, \dots, p_n \}$, if $(f, K)$ is $\Psi$-factorizable, there are canonical isomorphisms
\[ (\Psi^t_f)_{F(\alpha)} K \rightarrow (\Psi^t)_{p_1} \cdots (\Psi^t)_{p_n} K, \]
where tame nearby cycles over a curve $(\Psi^t)_{p_i} L$ are understood as the wild inertia invariants $((\Psi)_{p_i} L)^W$.  Similarly, under the same assumptions there are canonical isomorphisms
\[ (\Psi^u_f)_{F(\alpha)} K \rightarrow (\Psi^u)_{p_1} \cdots (\Psi^u)_{p_n} K. \]
In particular, for such pairs $(f, K)$, iterated tame and unipotent nearby cycles are independent of the choice of permutation, so that, for example,
\[ (\Psi^u)_{p_1} \cdots (\Psi^u)_{p_n} K \cong (\Psi^u)_{p_{\sigma(1)}} \cdots (\Psi^u)_{p_{\sigma(n)}} \]
for any permutation $\sigma$ of $\{1,\dots,n\}$.

In the study of unipotent nearby cycles over a one-dimensional base, there is a fundamental exact triangle
\[
\begin{tikzcd}
i^* j_* \arrow{r}& R\Psi^u \arrow{r}{(1 - T)}& R\Psi^u \arrow{r}{+1}& {}
\end{tikzcd}
\]
where $i$ and $j$ are the inclusion of the fiber over $0 \rightarrow \A^1$ and the complement, respectively.  We will want a version of this exact triangle over the base $\A^P$.  One important difference is that the complementary open embedding to $\bi_{\alpha}$ is not affine.  As a result, we may not apply the results of \cite[Section~4.1]{beilinson1982faisceaux} directly.  The following lemmas substitute for the necessary results.
\begin{lemma}\label{lem: ij push pull}
    Let $f \colon X \rightarrow \A^P$ separated of finite type over an algebraically closed base field with $K$ a perverse sheaf on $X$ such that the pair $(f, K)$ is $\Psi$-factorizable.  Let $\alpha = \alpha^* \colon P_* \rightarrow Q_*$ be a surjective map.  Then
    \[ {}^p \scH^{|Q|-|P|} \bi_{\alpha}^* \bj_{*} K \rightarrow (\Psi)_{F(\alpha)} K \]
    factors through $(\Psi^t)_{F(\alpha)} K$ and defines an isomorphism onto the $(\Z')^{\alpha^{-1}(*) \cap P}$-invariant part.
\end{lemma}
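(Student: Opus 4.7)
The plan is to induct on $n := |\alpha^{-1}(*) \cap P|$, with the base case $n=0$ tautological: then $\alpha$ is a bijection on $P$, $\bi_\alpha$ is an isomorphism of schemes, and both sides recover $K$ restricted to the geometric generic fiber of $f$.

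For the inductive step, I single out a coordinate $p \in \alpha^{-1}(*) \cap P$ and factor $\alpha = \alpha' \circ \beta$, where $\beta \colon P_* \to (P \setminus \{p\})_*$ collapses only $p$ to $*$ and $\alpha' \colon (P \setminus \{p\})_* \to Q_*$ is the induced surjection (so $|(\alpha')^{-1}(*) \cap (P \setminus \{p\})| = n-1$). Correspondingly I decompose
\[
\bj = j_p \circ \bj', \qquad \bi_\alpha = i_p \circ \bi_{\alpha'},
\]
with $i_p \colon Z_p \to X$ and $j_p \colon X \setminus Z_p \to X$ the complementary embeddings for $Z_p := f_p^{-1}(0)$, so that $\bi_\alpha^* \bj_* K \simeq \bi_{\alpha'}^* \bigl( i_p^* j_{p,*}(\bj'_* K) \bigr)$.

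The operator $i_p^* j_{p,*}$ is classical one-dimensional $i^*j_*$ along the $p$-coordinate. Applying the classical identification $i_p^* j_{p,*} L \simeq (R\Psi_p L)^{I_p}$ to $L = \bj'_* K$, and using that the wild inertia $W$ is pro-$p$ while $R\Psi_p[-1]$ is $t$-exact, yields
\[
i_p^* j_{p,*}(\bj'_* K) \simeq (R\Psi^t_p(\bj'_* K))^{\Z'}.
\]
To proceed, I need to commute $R\Psi_p$ past $\bj'_*$: using $\Psi$-goodness of $(f,K)$ together with proper base change along the $p$-th coordinate projection, I expect an isomorphism $R\Psi_p(\bj'_* K) \simeq \bj''_*(R\Psi_p K)$, where $\bj''$ is the open embedding of the complement of $\bigcup_{q \neq p}(Z_p \cap Z_q)$ inside $Z_p$. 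By Proposition~\ref{prop: perverse} and $\Psi$-factorizability, $R\Psi_p K[-1]$ is perverse on $Z_p$ and the pair $(Z_p \to \A^{P \setminus \{p\}}, R\Psi_p K[-1])$ is itself $\Psi$-factorizable, by iterating the specialization maps of $(f,K)$.

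Then I apply the inductive hypothesis to $(Z_p \to \A^{P \setminus \{p\}}, R\Psi_p K[-1])$ with the surjection $\alpha'$: this identifies
\[
{}^p \scH^{|Q|-|P|+1} \bi_{\alpha'}^* \bj''_*(R\Psi_p K[-1]) \simeq (R\Psi^t_{F(\alpha')}(R\Psi_p K[-1]))^{(\Z')^{n-1}}.
\]
Combining this with the $\Z'$-invariance in the $p$-factor from the previous step, and invoking the Galois-equivariant factorization $R\Psi_{F(\alpha)} K \simeq R\Psi_{F(\alpha')}(R\Psi_p K)$ of Proposition~\ref{prop: galois action}, yields the claimed identification of ${}^p\scH^{|Q|-|P|} \bi_\alpha^* \bj_* K$ with $(R\Psi^t_{F(\alpha)} K)^{(\Z')^{\alpha^{-1}(*) \cap P}}$ in perverse degree $|Q| - |P| = -n$.

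The main obstacle is the third-step commutation $R\Psi_p \circ \bj'_* \simeq \bj''_* \circ R\Psi_p$: the divisors $Z_q$ for $q \neq p$ removed by $\bj'$ need not be transverse to the $p$-direction, so $\Psi$-goodness must be invoked delicately, likely via the vanishing-topos base-change diagram for the projection $\A^P \to \A^{\{p\}}$. Verifying $\Psi$-factorizability of $(Z_p, R\Psi_p K[-1])$ to enable the inductive step, and tracking perverse shifts through the iteration, are secondary technical points that I expect to be formal given Proposition~\ref{prop: galois action}.
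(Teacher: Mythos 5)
Your strategy is genuinely different from the paper's and has a real gap at exactly the step you flag. You want to commute classical nearby cycles with pushforward along the open embedding $\bj'$, i.e.\ establish $R\Psi_p \circ \bj'_* \simeq \bj''_* \circ R\Psi_p$, and you suggest this should follow from $\Psi$-goodness plus proper base change. It does not. $\Psi$-goodness is a statement about base change of $R\Psi_f$ along morphisms $S' \to S$ of bases; it says nothing about whether nearby cycles in one coordinate commute with a $*$-pushforward along an open embedding in the fiber direction. That latter statement is a local-acyclicity-type assertion: for the triangle $\bj'_! K \to \bj'_* K \to \partial$, nearby cycles commute with $\bj'_!$ automatically, but the boundary term $\partial$ supported on $\bigcup_{q\neq p} Z_q$ need not behave, and nothing in the hypotheses of the lemma controls it. (The relevant ULA hypothesis over $\G_m^P$ is only imposed later, in Corollary~\ref{cor: unipotent general bases ula}, precisely to handle the passage $\bi_{\alpha^\Delta}^*$; it is \emph{not} available in Lemma~\ref{lem: ij push pull}.)

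There is a second, smaller elision: you write $i_p^* j_{p,*} L \simeq (R\Psi^t_p L)^{\Z'}$, but the correct formula is $i^*j_* L \simeq R\Gamma(I, R\Psi L)$, which has a nontrivial $H^1$ (the Tate-twisted coinvariants). Identifying the invariants with the relevant perverse cohomology requires a perverse-amplitude bound for $\bi_\alpha^*\bj_*$. The paper supplies this via \cite[Lemma~2.6]{achar2023higher}, which shows $\bi_\alpha^*\bj_*K$ lives in perverse degrees $\geq |Q|-|P|$, so that ${}^p\scH^{|Q|-|P|}$ captures exactly the invariant piece. Your write-up never invokes this.

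The paper's actual proof sidesteps the entire commutation issue by working globally in Galois cohomology rather than coordinate-by-coordinate: it identifies $\bi_\alpha^*\bj_*K$ (pulled back to the localization) with the derived $I$-invariants $R\Gamma(I, (\Psi)_{F(\alpha)}\pi^*K)$ for the full Galois group $I$ of the localization at $\Delta_\alpha(\overline{\eta_Q}\times\overline{0})$, then uses $\Psi$-factorizability of $(f,K)$ \emph{only} to say the $I$-action on the (already perverse) object $(\Psi)_{F(\alpha)}K[|Q|-|P|]$ factors through the product of the one-dimensional inertias, and finally applies the amplitude bound. No induction and no commutation of $R\Psi_p$ with $\bj'_*$ is needed. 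If you want to salvage your inductive plan, you would need to either prove the commutation under a ULA hypothesis (which weakens the statement) or replace the missing step by the Galois-cohomology identification, at which point you have essentially reproduced the paper's argument.
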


\begin{proof}
    By \cite[Lemma~2.6]{achar2023higher}, $\bi^*_{\alpha} \bj_{*} K$ lives in perverse degrees $\ge |Q|-|P|$.

    On the other hand, we may relate this complex to derived global sections in a way parallel to \cite[Section~1.2]{saito2021characteristic}.  Let $\xi$ be the generic point of the localization of $\A^P$ at $\Delta_{\alpha}(\overline{\eta_Q} \times \overline{0})$.  Let $I$ be the Galois group $\Gal(\overline{\xi} / \xi)$, noting that we may choose $\overline{\xi}$ to be identified with the geometric point $\overline{\eta_Q}$.  We have the following commutative diagram:
    \begin{equation}
        \begin{tikzcd}
            \overline{\eta_Q} \arrow{r}{\pi}& \xi \arrow{r}{j_{\xi}} \arrow{d}{h}& \A^P_{(\Delta_{\alpha}(\overline{\eta_Q} \times \overline{0}))} \arrow{d}{h} & \Delta_{\alpha}(\overline{\eta_Q} \times \overline{0}) \arrow{l}{i_{\alpha}} \arrow{d}{h} \\
            & \G_m^P \arrow{r}{j}& \A^P & \Delta_{\alpha}(\A^Q) \arrow{l}{\bi_{\alpha}}
        \end{tikzcd}
    \end{equation}
    where $\pi$ has Galois group $I$ and all other maps are natural ones, with the notation $h$ used for the vertical maps, by abuse of notation.  When we pull back the above diagram along $f \colon X \rightarrow \A^P$, nearby cycles can be considered as a functor $(\Psi)_{F(\alpha)} \pi^* K = i_{\alpha}^* (j_{\xi})_*\pi_* \pi^* K$, so taking the derived $I$-invariants give the identification
    \[ R\Gamma(I, (\Psi)_{F(\alpha)} \pi^* K) \cong i_{\alpha}^* (j_{\xi})_* K. \]
    If $(f, K)$ is $\Psi$-factorizable, then the action of $I$ factors through the power of inertia on each copy of one-dimensional nearby cycles and gives
    \[ R\Gamma(I, \Psi_{F(\alpha)} \pi^* h^* K) \cong i_{\alpha}^* (j_{\xi})_* h^* K \cong h^* \bi_{\alpha} j_* K, \]
    where the last isomorphism follows by writing the localization as the inverse limit of \'etale maps and applying the smooth base change theorem \cite[Lemma~59.89.3]{stacks-project}.

    Since $(\Psi)_{F(\alpha)} K[|Q|-|P|]$ is perverse, in perverse degree $|Q|-|P|$, the $I$-action factors through the $\pi^{-1}(*)$-power of the local inertia of $\A^1_{(0)}$.  Therefore, the $I$-invariants of $(\Psi)_{F(\alpha)} K$ are the same as $(\Z')^{\pi^{-1}(*)}$-invariants of $(\Psi^t)_{F(\alpha)} K$.
\end{proof}

For the next lemma, we introduce some notation for the relevant maps.  For a map $\alpha \colon P_* \rightarrow Q_*$, we define the $\alpha$-generic part of $\A^P$, $\A^P_{\alpha}$ as the subset of $\A^P$ such that $x_p \ne 0$ for all $\alpha(p) \ne *$.  Define $\bj_{\alpha} \colon \A^P_{\alpha} \rightarrow \A^P$ as the open embedding and $\bj_{X,\alpha}$ to be the pullback of this map to any $X$ over $\A^P$.  For any $X$ over $\A^P$, we also define $X_{\alpha} = X \times_{\A^P} \A^P_{\alpha}$, equipped with structure maps $X_{\alpha} \rightarrow \A^P_{\alpha} \rightarrow \A^P$.  For surjective maps $\alpha = \alpha^* \colon P_* \rightarrow Q_*$ and $\beta = \beta^* \colon Q_* \rightarrow R_*$, a composition $\beta \circ \alpha$, the map $\bi_{\A^P_{\beta \alpha}, \beta \circ \alpha}$ factors as
\begin{equation}
    \begin{tikzcd}
        \G_m^R \arrow{r}{\bi_{\A^Q_{\beta}, \beta}} & \A^Q_{\beta} \arrow{r}{\bi_{\A^P_{\beta \alpha}, \alpha}} & \A^P_{\beta \alpha},
    \end{tikzcd}
\end{equation}
so there is a diagram
\begin{equation}\label{eqn: composite diagram}
    \begin{tikzcd}
        & & \G_m^R \arrow{d}{\bi_{\A^Q_{\beta}, \beta}} \\
        & \G_m^Q \arrow{r}{\bj_{\A^Q_{\beta}}} \arrow{d}{\bi_{\A^P_{\alpha}, \alpha}} & \A^Q_{\beta} \arrow{d}{\bi_{\A^P_{\beta \alpha}, \alpha}} \\
        \G_m^P \arrow{r}{\bj_{\A^P_{\alpha}}} & \A^P_{\alpha} \arrow{r}{\bj_{\A^P_{\beta \alpha}, \alpha}} & \A^P_{\beta \alpha}
    \end{tikzcd}
\end{equation}

\begin{lemma}\label{lem: ij middle extension}
    Under the same assumptions as Lemma~\ref{lem: ij push pull},
    \[ \bi_{\alpha}^* \bj_{!*} K \cong ((\Psi^u_f)_{F(\alpha)} K)^{(\Z')^{\alpha^{-1}(*) \cap P}}. \]
\end{lemma}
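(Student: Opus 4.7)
The plan is to reduce to an iterated application of the classical one-dimensional statement that $i^* j_{!*} L$ is (up to the perverse shift) isomorphic to the $T$-invariant subobject of $\Psi^u L$, for $L$ a perverse sheaf and $j$ the complement of a smooth divisor.

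I would begin by factoring the open immersion $\bj$ as $\bj = \bj_\alpha \circ \bj'$, where $\bj_\alpha$ is the pullback to $X$ of the open immersion of $\A^P_\alpha \subset \A^P$ (the locus where the $Q$-coordinates are nonzero), and $\bj' \colon X \times_{\A^P} \G_m^P \to X \times_{\A^P} \A^P_\alpha$ is the further open immersion whose complement is a strict normal crossings divisor in the ambient space. Since the middle extension satisfies $\bj_\alpha^*(\bj_\alpha)_{!*} = \mathrm{id}$, this reduces the computation of $\bi_\alpha^* \bj_{!*} K$ to $\widetilde{\bi}_\alpha^* \bj'_{!*} K$, where $\widetilde{\bi}_\alpha$ is the induced closed embedding of the deepest stratum $X \times_{\A^P} \G_m^Q$ into $X \times_{\A^P} \A^P_\alpha$. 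This uses the fact that the image of $\bi_\alpha$ meets $\A^P_\alpha$ precisely along $\G_m^Q$, which is the geometric generic locus where the nearby-cycles formulation applies.

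Next, order $P_0 = \alpha^{-1}(*) \cap P$ as $\{p_1, \ldots, p_n\}$ and factor $\bj'$ as a chain of one-dimensional open immersions, each the complement of a smooth divisor $D_{p_i}$. Since middle extension distributes over such compositions, $\bj'_{!*} K$ is built up iteratively. At the $i$-th stage, the classical one-dimensional result identifies the restriction of the current middle extension to $D_{p_i}$ with the $T_{p_i}$-invariant part of the one-dimensional unipotent nearby cycles functor $(\Psi^u)_{p_i}$ applied to the previous stage's output. Proposition~\ref{prop: perverse}, applied iteratively, ensures perverseness is preserved at each stage. Combining the iterations yields
\[ \widetilde{\bi}_\alpha^* \bj'_{!*} K \cong \bigl( (\Psi^u)_{p_1} \cdots (\Psi^u)_{p_n} K \bigr)^{T_{p_1}, \ldots, T_{p_n}}. \]
Finally, Proposition~\ref{prop: galois action}, together with the hypothesis of $\Psi$-factorizability, identifies these iterated one-dimensional unipotent nearby cycles with $(\Psi^u_f)_{F(\alpha)} K$ and matches the iterated $T_{p_i}$-invariants with the $(\Z')^{P_0}$-invariants.

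The main obstacle lies in the iteration: at each stage we need the output to be a perverse sheaf in order for the next one-dimensional middle extension and restriction to be well-defined. This is precisely the content of the classical one-dimensional theorem (as recalled in Beilinson's gluing), which ensures that $i^* j_{!*} L \cong (\Psi^u L)^T$ is itself perverse whenever $L$ is perverse and $j$ is the complement of a smooth divisor. The consistency of $\Psi$-factorizability through the iteration is ensured by the composition property built into Proposition~\ref{prop: galois action}, which permits iterated one-dimensional nearby cycles to be identified with nearby cycles over the full specialization $F(\alpha)$.
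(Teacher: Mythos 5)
Your approach shares the same backbone as the paper's proof: reduce to one-dimensional nearby cycles by peeling off one divisor at a time, invoke Beilinson's one-dimensional identification $i^*j_{!*}L \cong (\Psi^u L)^T$, and then stitch the one-dimensional statements together using $\Psi$-factorizability and Proposition~\ref{prop: galois action}. The initial reduction to $\widetilde{\bi}_\alpha^*\bj'_{!*}K$ via the factorization $\bj = \bj_\alpha \circ \bj'$ and the equality $(j_1\circ j_2)_{!*} = (j_1)_{!*}(j_2)_{!*}$ is valid at the level of the geometric fiber over $\Delta_\alpha(\overline{\eta_Q}\times\overline{0})$, which is all the statement requires.

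The gap is in the iteration. After the first application of Beilinson you obtain
\[
\iota_{p_n}^*\,(\bj')_{!*}K \;\cong\; \bigl(\Psi^u_{p_n}\bigl((j'_{n-1})_{!*}\cdots(j'_1)_{!*}K\bigr)\bigr)^{T_{p_n}}
\]
as a perverse sheaf on the divisor $D_{p_n}$. To apply Beilinson again along $D_{p_{n-1}}$, it is not enough that this object be perverse (which is the only property your ``main obstacle'' paragraph addresses); you need it to be the \emph{intermediate extension} of its restriction to the open stratum $D_{p_n}\setminus D_{p_{n-1}}$, equivalently that $\Psi^u_{p_n}$ commutes with the intermediate extension $(j'_{n-1})_{!*}$. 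That commutation is precisely the nontrivial content and is not automatic from $\Psi$-factorizability in the form you invoke it; it is the point at which the argument must actually engage with the $\Psi$-goodness/base-change structure. The paper handles this by inducting on $|\alpha^{-1}(*)|$ and running a commutative square with $(\bj_{\alpha'})_!$ and $(\bj_{\alpha'})_*$ (Diagram~\ref{eqn: composite diagram}), where the top arrow is an isomorphism by proper base change and the intermediate extension is identified with the image of a map of perverse sheaves whose pullback is controlled by the inductive perversity statement. Your write-up replaces this step with ``Combining the iterations yields\dots'', which is exactly the assertion that needs proof. Proposition~\ref{prop: galois action} by itself only lets you compare $(\Psi^u_f)_{F(\alpha)}K$ with the iterated one-dimensional nearby cycles of $K$; it does not tell you how $\Psi^u_{p_n}$ interacts with intermediate extensions taken in the remaining directions, which is what the iteration requires.
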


\begin{proof}
    We induct on $|\alpha^{-1}(*)|$ and handle the case of a composition $\beta \alpha$ with both $\beta = \beta^*$ and $\alpha = \alpha^*$.  We note that the one-dimensional case $|\alpha^{-1}(*)| = 2$ was proved by Beilinson \cite{beilinson1987glue}.  Diagram~\ref{eqn: composite diagram} produces a diagram
    \begin{equation}
    \begin{tikzcd}
        \bi_{\A^Q_{\beta}, \beta}^* \bi_{\A^P_{\beta \alpha}, \alpha}^* (\bj_{\A^P_{\beta \alpha}, \alpha})_! (\bj_{\A^P_{\alpha}})_{!*} K \arrow{r} \arrow{d} & \bi_{\A^Q_{\beta}, \beta}^* (\bj_{\A^Q_{\beta}})_! \bi_{\A^P_{\alpha}, \alpha}^* (\bj_{\A^P_{\alpha}})_{!*} K \arrow{d} \\
        \bi_{\A^Q_{\beta}, \beta}^* \bi_{\A^P_{\beta \alpha}, \alpha}^* (\bj_{\A^P_{\beta \alpha}, \alpha})_* (\bj_{\A^P_{\alpha}})_{!*} K \arrow{r}& \bi_{\A^Q_{\beta}, \beta}^* (\bj_{\A^Q_{\beta}})_* \bi_{\A^P_{\alpha}, \alpha}^* (\bj_{\A^P_{\alpha}})_{!*} K
    \end{tikzcd}
    \end{equation}
    where the top horizontal arrow is an isomorphism.  The inductive hypothesis implies that
    \[ \bi_{\A^P_{\alpha}, \alpha}^* (\bj_{\A^P_{\alpha}})_{!*} K[|Q|-|P|] \] 
    is perverse, so we may make sense of
    \[ \bi_{\A^Q_{\beta}, \beta}^* (\bj_{\A^Q_{\beta}})_{!*} \bi_{\A^P_{\alpha}, \alpha}^* (\bj_{\A^P_{\alpha}})_{!*} K[|Q|-|P|] \]
    as the $\bi_{\A^Q_{\beta},\beta}^*$ pullback of the image of perverse sheaves defining the intermediate extension in the right arrow.  Since the diagram commutes, we conclude that $\bi_{\alpha}^* \bj_{!*} K \cong \bi_{\A^Q_{\beta}, \beta}^* (\bj_{\A^Q_{\beta}})_{!*} \bi_{\A^P_{\alpha}, \alpha}^* (\bj_{\A^P_{\alpha}})_{!*} K$.  On the other hand, this object is
    \[ ((\Psi^u_f)_{F(\beta)} (\Psi)_{F(\alpha)} K)^{(\Z')^{\alpha^{-1}(*) \cap P} \times (\Z')^{\beta^{-1}(*) \cap Q}} \cong ((\Psi^u_f)_{F(\beta \alpha)} K)^{(\Z')^{(\beta \alpha)^{-1}(*) \cap P}}. \]
\end{proof}

\subsection{Relation of the two definitions}

\begin{lemma}\label{lem:L relation}
    The pair $(1, \mathscr{L}_{\ba})$ is $\Psi$-factorizable, where $1$ is the identity map.  If $\ba(p) = 1$ for all $p \not \in \alpha^{-1}(*) \cap P$, there are isomorphisms,
    \begin{equation}
        (\Psi_1)_{F(\alpha)}(\mathscr{L}_{\ba}) = (\Psi^{u}_1)_{F(\alpha)}(\mathscr{L}_{\ba}) = L_{\ba}
    \end{equation}
    as representations of $(\Zhat')^{\alpha^{-1}(*) \cap P}$ generated by $T_{p}$ for $p \in \alpha^{-1}(*) \cap P$.
\end{lemma}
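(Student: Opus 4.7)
The plan is to compute the stalks of $(\Psi_1)_{F(\alpha)}(\mathscr{L}_{\ba})$ directly via the Milnor tube, and then to use the product structure of $\A^P$ together with the box product structure of $\mathscr{L}_{\ba}$ to identify the Galois action and to establish $\Psi$-factorizability.

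First I would observe that since $1 \colon \A^P \hookrightarrow \A^{P_*}$ is the closed embedding onto the locus where the $*$-coordinate vanishes, the Milnor tube for the specialization $F(\alpha)$ at a geometric point $x$ of the fiber over $\Delta_{\alpha}(\overline{\eta_Q \times \overline{0}})$ is obtained by cutting out $\{x_*=0\}$ from $\A^{P_*}_{(\overline{\eta_P \times \overline{0}})}$. The latter is the strict henselization of a discrete valuation ring with uniformizer $x_*$, so the slice $\{x_*=0\}$ consists of its closed point alone, identified with $\overline{\eta_P}$. Consequently the stalk of $(\Psi_1)_{F(\alpha)}(\mathscr{L}_{\ba})$ at $x$ is $\mathscr{L}_{\ba}|_{\overline{\eta_P}}$, whose underlying vector space is $L_{\ba}$.

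Next I would identify the Galois action. By construction $\mathscr{L}_{\ba}$ arises from the tame unipotent representation $L_{\ba} = L_{\ba(p_1)} \otimes \cdots \otimes L_{\ba(p_n)}$ of $(\Zhat')^P$, with $T_p$ acting by a Jordan block of size $\ba(p)$ on the $p$-th factor. The relevant inertia acts on the stalk through the tame quotient indexed by the collapsing directions, namely $(\Zhat')^{\alpha^{-1}(*) \cap P}$. For $p \notin \alpha^{-1}(*) \cap P$, the hypothesis $\ba(p) = 1$ makes $L_{\ba(p)}$ the trivial representation, so restricting the action to this subgroup recovers $L_{\ba}$ with the prescribed action of $T_p$ for $p \in \alpha^{-1}(*) \cap P$. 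Since this action is already tame and unipotent, it follows immediately that $(\Psi_1)_{F(\alpha)}(\mathscr{L}_{\ba}) = (\Psi^u_1)_{F(\alpha)}(\mathscr{L}_{\ba})$.

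For $\Psi$-factorizability, I would exploit the product decomposition $\A^P = \prod_p \A^1$ and the box product $\mathscr{L}_{\ba} = \boxtimes_p \mathscr{L}_{\ba(p)}$. A K\"unneth-style argument should reduce both the base change maps (for $\Psi$-goodness) and the composition maps (for iterated specializations) to one-dimensional versions, coordinate by coordinate. In each $\A^1$ factor these hold automatically since the base is a smooth curve. The main obstacle will be the ``diagonal'' structure of $F(\alpha)$ when several $p \in P$ share the same image $q \in Q$: the specialization in the corresponding $\A^{P_q}$ factor is to the small diagonal rather than to a product of points. However, the hypothesis $\ba(p) = 1$ in those directions makes $\mathscr{L}_{\ba(p)}$ the constant sheaf on $\G_m$, so the contribution of these directions to the nearby cycles reduces to mere pullback along the diagonal, and the K\"unneth reduction goes through.
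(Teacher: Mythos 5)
Your overall strategy --- reduce to the one-dimensional case by exploiting the external product structure $\scL_{\ba} = \boxtimes_p \scL_{\ba(p)}$ over $\A^P = \prod_p \A^1$ --- is exactly the paper's proof, which simply cites the K\"unneth theorem for nearby cycles over general bases. However, the execution has two genuine problems.

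First, the Milnor tube discussion does not hold together. Here $X = S = \A^P$ with the identity structure map, so the Milnor tube for $F(\alpha)$ at the point $x = \Delta_\alpha(\overline{\eta_Q} \times \overline{0})$ collapses: $X_{(x)} \times_{S_{(s)}} S_{(t)} = S_{(t)}$, which is the geometric point $\overline{\eta_P}$ itself, and the shredded nearby cycle is just the stalk of $\scL_{\ba}$ at $\overline{\eta_P}$. Your description of ``cutting out $\{x_* = 0\}$ from $\A^{P_*}_{(\overline{\eta_P} \times \overline{0})}$'' is not the Milnor tube of this specialization (whose source and target both lie inside $\{x_*=0\}$), and it is internally inconsistent with the next sentence, which says the result is the closed point. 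You reach the correct underlying vector space $L_{\ba}$, but by an argument that does not track; the identification of the Galois action is then asserted rather than derived.

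Second, and more seriously, your argument for $\Psi$-factorizability invokes the hypothesis $\ba(p) = 1$ for $p \notin \alpha^{-1}(*) \cap P$ to neutralize the diagonal specializations. That hypothesis governs only the stalk identification $(\Psi_1)_{F(\alpha)}(\scL_{\ba}) \cong L_{\ba}$; the $\Psi$-factorizability assertion is stated for \emph{every} $\ba$ and must hold for \emph{every} composite of specializations on $\A^P$, not just $F(\alpha)$, so you cannot use it there. The diagonal worry is resolved not by trivializing coordinates but by the fact that the K\"unneth theorem identifies $R\Psi$ of the external product with the external product of the one-dimensional $R\Psi$'s already at the level of the vanishing topos, hence compatibly with every base change and every specialization --- including the diagonal ones. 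Combined with automatic $\Psi$-goodness over the one-dimensional base $\A^1$, this yields $\Psi$-goodness and $\Psi$-factorizability of $(1, \scL_{\ba})$ without any restriction on $\ba$.
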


\begin{proof}
    Since $\mathscr{L}_{\ba}$ is an external product, this reduces to the one-dimensional case by the K\"unneth formula \cite[Theorem~2.3]{illusie2017around} \cite[Proposition~2.10]{salmon2023restricted}.
\end{proof}

We now arrive at the connection between $(\Psi_f^{u})_{F(\alpha)} K$ and $\Upsilon^{\alpha}_f(K)$.  This will generalize \cite[Proposition~3.1]{morel2018beilinson}.
\begin{proposition}
    Let $(f,K)$ be $\Psi$-factorizable with $K$ a perverse sheaf.  Let $\alpha = \alpha^* \colon P_* \rightarrow Q_*$ be surjective with $\alpha^{-1}(*) \cap P = \{ p_1, \dots, p_n \}$.  Let $\ba \colon \alpha^{-1}(*) \cap P \rightarrow \Z_{\ge 1}$.  Then
    \begin{equation}\label{eqn: nilpotent kernel left}
        \bigcap_{\ell \in \alpha^{-1}(*) \cap P} \ker ((1-T_{\ell})^{\ba(\ell)}, (\Psi_f^u)_{F(\alpha)} K)
    \end{equation}
    is isomorphic to
    \begin{equation}\label{eqn: nilpotent kernel right}
        \bigcap_{\ell \in \alpha^{-1}(*) \cap P} \ker (1-T_{\ell}, (\Psi_f^u)_{F(\alpha)} (K \otimes f^* \mathscr{L}_{\ba})).
    \end{equation}
\end{proposition}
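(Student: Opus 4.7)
The plan is to reduce to the one-dimensional case, \cite[Proposition~3.1]{morel2018beilinson}, by combining $\Psi$-factorizability with a K\"unneth-type formula that lets us extract the local system $\mathscr{L}_\ba$ from inside the unipotent nearby cycles functor.

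First, I would invoke Proposition~\ref{prop: galois action} together with the external tensor decomposition $\mathscr{L}_\ba = \mathscr{L}_{\ba(p_1)} \boxtimes \cdots \boxtimes \mathscr{L}_{\ba(p_n)}$ to write
\[(\Psi_f^u)_{F(\alpha)}(K) \cong \Psi^u_{p_1} \cdots \Psi^u_{p_n} K,\]
and similarly for $K \otimes f^* \mathscr{L}_\ba$. Writing $f^* \mathscr{L}_\ba = \bigotimes_i f_i^* \mathscr{L}_{\ba(p_i)}$ with $f_i \colon X \to \A^{\{p_i\}}$ the $i$-th projection, the K\"unneth formula \cite[Proposition~2.10]{salmon2023restricted} together with Lemma~\ref{lem:L relation} lets us extract each factor $L_{\ba(p_i)}$ from its corresponding one-dimensional unipotent nearby cycle. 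Iterating coordinate by coordinate should produce a canonical isomorphism
\[(\Psi^u_f)_{F(\alpha)}(K \otimes f^* \mathscr{L}_\ba) \cong (\Psi^u_f)_{F(\alpha)}(K) \otimes L_\ba\]
as $(\Zhat')^{\alpha^{-1}(*) \cap P}$-modules, with $T_{p_i}$ acting diagonally through its action on $(\Psi^u_f)_{F(\alpha)}(K)$ and on the $i$-th tensor factor $L_{\ba(p_i)}$ of $L_\ba$.

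Second, for each coordinate $\ell$ separately, the one-dimensional case \cite[Proposition~3.1]{morel2018beilinson} (or a direct computation using the Jordan basis $e_0,\dots,e_{\ba(\ell)-1}$ of $L_{\ba(\ell)}$, in which the equation $T_\ell z = z$ becomes the recurrence $(1-T_\ell)w_j = T_\ell w_{j-1}$ with solution $w_j = T_\ell^{-(\ba(\ell)-1-j)}(1-T_\ell)^{\ba(\ell)-1-j} w_{\ba(\ell)-1}$) gives, for any finite-dimensional unipotent $T_\ell$-module $W$, a canonical isomorphism
\[\ker\bigl(1 - T_\ell,\, W \otimes L_{\ba(\ell)}\bigr) \cong \ker\bigl((1 - T_\ell)^{\ba(\ell)},\, W\bigr)\]
with respect to the diagonal $T_\ell$-action on the left. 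Since $T_\ell$ acts trivially on the tensor factors $L_{\ba(p_j)}$ for $j \ne \ell$ and the various $T_\ell$ commute, applying this isomorphism one coordinate at a time yields
\[\bigcap_\ell \ker\bigl(1 - T_\ell,\, (\Psi^u_f)_{F(\alpha)}(K) \otimes L_\ba\bigr) \cong \bigcap_\ell \ker\bigl((1-T_\ell)^{\ba(\ell)},\, (\Psi^u_f)_{F(\alpha)}(K)\bigr).\]
Combined with the K\"unneth identification of the first step, this yields the claimed isomorphism.

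The main obstacle will be verifying that the K\"unneth formula applies correctly to the \emph{internal} tensor product $K \otimes f^* \mathscr{L}_\ba$ on $X$, rather than to a literal external product on a product of schemes. I expect to resolve this by applying K\"unneth to the external product $K \boxtimes \mathscr{L}_\ba$ on $X \times \A^P$ over $\A^P \times \A^P$ and then pulling back along the graph of $f$; the $\Psi$-factorizability of $(f, K)$ and of $(1, \mathscr{L}_\ba)$ (the latter supplied by Lemma~\ref{lem:L relation}) is what ensures that nearby cycles along the specialization $F(\alpha)$ are compatible with this base change in the iterated form required to extract the Galois action on each factor $L_{\ba(p_i)}$.
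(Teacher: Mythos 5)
Your proposal is correct in outline but takes a genuinely different route from the paper. Both proofs rest on establishing the intermediate isomorphism $(\Psi^u_f)_{F(\alpha)}(K \otimes f^* \mathscr{L}_\ba) \cong (\Psi^u_f)_{F(\alpha)}(K) \otimes L_\ba$ and then a linear-algebra identification of kernels, but the two steps are argued differently. For the intermediate isomorphism, the paper inducts on $\ba$ using the short exact sequences $0 \to \mathscr{L}_\ba \to \mathscr{L}_{\ba + e_p} \to \mathscr{L}_{\ba^{(p)}} \to 0$ and a two-out-of-three argument in the distinguished-triangle diagram; you instead propose extracting $L_{\ba(p_i)}$ factor by factor, either by iterating the one-dimensional twisting formula coordinate by coordinate (after factoring $\Psi^u_{F(\alpha)}$ as $\Psi^u_{p_1}\cdots\Psi^u_{p_n}$ via Proposition~\ref{prop: galois action}), or by a K\"unneth argument on $X \times \A^P$ over $\A^P \times \A^P$ followed by base change along the diagonal. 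The iterated-coordinate version is the cleaner route of the two you mention and genuinely works, but you should record explicitly that the pair $(f, K \otimes f^*\mathscr{L}_\ba)$ inherits $\Psi$-factorizability from $(f,K)$ via the projection formula --- this is what legitimizes writing $\Psi^u_{F(\alpha)}(K \otimes f^*\mathscr{L}_\ba)$ as an iterated composition in the first place, and the paper cites it explicitly. The external-product route through $\A^P \times \A^P$ is more delicate than you allow, because the specialization of interest on the diagonal is not literally the restriction of the box-product specialization $F(\alpha)\boxtimes F(\alpha)$ (they live over different generic points), and reconciling the two requires the same kind of careful bookkeeping of geometric points that underlies Proposition~\ref{prop: galois action}; the short-exact-sequence induction sidesteps this entirely. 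For the kernel step, you apply the one-dimensional kernel isomorphism one coordinate at a time (using equivariance of the one-dimensional iso under the commuting $T_{p_j}$), whereas the paper writes down a single explicit map $\gamma$ with coefficients $\prod_i(-(1-T_{p_i}))^{\ba(p_i)-1-j_i}$ and verifies it is an isomorphism onto the joint kernel in one shot; the two are equivalent, your version being modular and the paper's being self-contained. In short, your argument is sound and buys modularity (reduction to well-known one-variable statements), at the cost of having to carefully justify either the K\"unneth base change or the equivariance properties used in the iteration; the paper's argument is less conceptual but avoids those subtleties.
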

\begin{proof}
    To make sense of the right hand side as a perverse sheaf, we note that the fact that the pair $(f, K \otimes f^*\mathscr{L}_{\ba})$ is $\Psi$-factorizable follows by the projection formula \cite[Example~4.26~(5)]{lu2019duality} \cite[Proposition~A.6]{illusie2017around}.

    For a given $p \in P$, we define
    \[\ba^{(p)}(j) = \begin{cases} \ba(j) & j \ne p, \\ 1 & j = p. \end{cases} \]
    Identifying the space of maps $\ba \colon P \to \Z$ with $\Z^P$ with addition and letting $e_p$ be the unit vector in the $p$ coordinate, there is an exact sequence
    \[ 0 \rightarrow \mathscr{L}_{\ba} \rightarrow \mathscr{L}_{\ba+e_p} \rightarrow \mathscr{L}_{\ba^{(p)}} \rightarrow 0. \]
    
    Using this exact sequence, we first establish an isomorphism $(\Psi^u_f)_{F(\alpha)}(K \otimes f^* \mathscr{L}_{\ba}) \cong (\Psi^u_f)_{F(\alpha)}(K) \otimes L_{\ba}$.  In the base case $\ba(p) = 1$ for all $p$ this is a tautology, while the inductive step is built from the commutative diagram
    \begin{equation}
        \begin{tikzcd}
            (\Psi_{f}^{u})_{F(\alpha)}(K) \otimes L_{\ba} \arrow{r} \arrow{d} & (\Psi^u_f)_{F(\alpha)}(K \otimes f^* \mathscr{L}_{\ba}) \arrow{d} \\
            (\Psi_{f}^{u})_{F(\alpha)}(K) \otimes L_{\ba+e_{p}} \arrow{r} \arrow{d} & (\Psi^u_f)_{F(\alpha)}(K \otimes f^* \mathscr{L}_{\ba + e_{p}}) \arrow{d} \\
            (\Psi_{f}^{u})_{F(\alpha)}(K) \otimes L_{\ba^{(p)}} \arrow{r} & (\Psi^u_f)_{F(\alpha)}(K \otimes f^* \mathscr{L}_{\ba^{(p)}}),
        \end{tikzcd}
    \end{equation}
    noting that the horizontal arrows are compositions of canonical maps coming from pullback and tensor product, as well as Lemma~\ref{lem:L relation}.  Since the vertical columns are distinguished triangles and the top and bottom rows are isomorphisms, we conclude the middle row is an isomorphism as well.

    We are now ready to prove the proposition.  There is a vector $v$ of $L_\ba$ such that, as a vector space, $L_{\ba}$ is generated by $\prod (1-T_{p_k})^{j_k} v := v_{j_1,\dots,j_n}$ for $0 \le j_i < \ba(p_i)$.  This gives a distinguished basis $v_{j_1,\dots,j_n}$ of $L_{\ba}$ as a vector space.
    
    Define a map $\gamma \colon (\Psi^u_f)_{F(\alpha)}(K) \rightarrow (\Psi^u_f)_{F(\alpha)}(K) \otimes L_{\ba}$ by
    \begin{equation}
        \gamma(x) = \sum_{j_1 = 0}^{\ba(p_1)-1} \cdots \sum_{j_n=0}^{\ba(p_n)-1} \left(\prod_{i=1}^n (-(1-T_{p_i}))^{\ba(p_i) - 1 - j_{i}} x\right) v_{j_1,\dots,j_m}.
    \end{equation}
    We note that $\gamma$ is injective because the coefficient of $v_{\ba(p_1)-1, \dots, \ba(p_n)-1}$ is $x$.  A straightforward computation shows that $(1-T_{\ell}) \gamma(x)$ is equal to
    \begin{equation}
        \sum_{j_1,\dots,j_{\ell-1},j_{\ell+1}, \dots, j_n} \left(\prod_{p_i \ne \ell} (-(1-T_{p_i}))^{\ba(p_i) - 1 - j_i} \cdot (-(1-T_{\ell}))^{\ba(\ell)} x\right) v_{j_1,\dots,j_{\ell-1},0,j_{\ell+1},\dots,j_n}.
    \end{equation}
    By considering when this expression is equal to $0$, we may show that $x$ is in the kernel of $(1-T_{\ell})^{\ba(\ell)}$ if and only if $\gamma(x)$ is in the kernel of $(1-T_{\ell})$.
    
    To show the isomorphism, it suffices to show that $\gamma$ surjects onto the intersection of the kernels of $(1-T_{\ell})$ for $\ell \in \alpha^{-1}(*) \cap P$.  Writing $\ell = p_i$ and $y \in (\Psi^u_f)_{F(\alpha)}(K) \otimes L_{\ba}$ as $\sum a_{j_1,\dots,j_n} v_{j_1,\dots,j_n}$, the condition that $y$ is in the kernel of $1-T_{\ell}$ is equivalent to relations $a_{j_1, \dots, j_{i-1}, j_i+1, j_{i+1}, \dots, j_n} = -(1-T_{\ell}) a_{j_1, \dots, j_n}$ and $a_{j_1, \dots, j_{i-1}, 0, j_{i+1}, \dots, j_n} = 0$.  If $y$ is in the intersection of all the kernels of $1-T_{\ell}$ for $\ell \in \alpha^{-1}(*) \cap P$, then $y$ is in the image of $\gamma(x)$ as $y$ can be determined uniquely by the single coefficient $a_{\ba(p_1)-1, \dots, \ba(p_n)-1}$ and $y = \gamma(a_{\ba(p_1)-1, \dots, \ba(p_n)-1})$.
    
    So $\gamma$ restricts to an isomorphism between the kernel of $(1-T_{\ell})^{\ba(\ell)}$ in $(\Psi^u_f)_{F(\alpha)} K$ and the kernel of $(1-T_{\ell})$ in $(\Psi^u_f)_{F(\alpha)} (K \otimes f^* \scL_{\ba})$.
\end{proof}

\begin{corollary}\label{cor: unipotent general bases}
    Let $(f,K)$ be $\Psi$-factorizable and let $\alpha = \alpha^* \colon P_* \rightarrow Q_*$ be surjective.  Then $\varinjlim {}^p \scH^{|Q|-|P|} \bi_{\alpha}^* j_*(K \otimes \mathscr{L}_{\ba})$ exists as a perverse sheaf, stabilizing for $\ba$ sufficiently large (one of the two necessary conditions in \cite[Definition~2.3]{achar2023higher}).  Moreover, this quantity coincides with $\Upsilon^{\alpha}_f(K) \cong (\Psi_f^{u})_{F(\alpha)} K[|Q|-|P|]$.
\end{corollary}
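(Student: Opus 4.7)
The plan is to identify, functorially in $\ba$, each term ${}^p\scH^{|Q|-|P|}\bi_\alpha^*\bj_*(K \otimes f^*\mathscr{L}_\ba)$ of the colimit with a nilpotent kernel inside $(\Psi^u_f)_{F(\alpha)} K$, and then observe that these kernels exhaust the whole unipotent nearby cycles once $\ba$ is large enough in every coordinate.

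First I would verify that the pair $(f, K \otimes f^* \mathscr{L}_\ba)$ is $\Psi$-factorizable; this follows from the projection formula exactly as in the proof of the preceding Proposition. Applying Lemma~\ref{lem: ij push pull} to this pair then identifies
\[ {}^p\scH^{|Q|-|P|}\bi_\alpha^*\bj_*(K \otimes f^* \mathscr{L}_\ba) \;\cong\; \bigl((\Psi^t_f)_{F(\alpha)}(K \otimes f^*\mathscr{L}_\ba)\bigr)^{(\Z')^{\alpha^{-1}(*) \cap P}}. \]

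Next I would show that these $(\Z')$-invariants are concentrated in the unipotent summand of the tame nearby cycles. On the complementary $\ne 1$ generalized-eigenspace summand, by definition at least one $T_\ell$ has no generalized eigenvalue equal to $1$, so there are no simultaneous $T_\ell$-fixed vectors and the $(\Z')$-invariants vanish there. The invariants therefore agree with $\bigcap_\ell \ker\bigl(1-T_\ell,\, (\Psi^u_f)_{F(\alpha)}(K \otimes f^*\mathscr{L}_\ba)\bigr)$, i.e.\ with expression~(\ref{eqn: nilpotent kernel right}) of the preceding Proposition, which by that Proposition is canonically isomorphic to
\[ \bigcap_{\ell \in \alpha^{-1}(*) \cap P} \ker\bigl((1-T_\ell)^{\ba(\ell)},\, (\Psi^u_f)_{F(\alpha)} K\bigr). \]

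Finally I would pass to the colimit. The transition maps in the direct system, induced by the inclusions $\mathscr{L}_\ba \hookrightarrow \mathscr{L}_{\ba'}$ for $\ba \le \ba'$, correspond under the identifications above to the obvious inclusions of nilpotent kernels on the fixed perverse sheaf $(\Psi^u_f)_{F(\alpha)} K$. Since $T_\ell$ acts unipotently on this finite-length perverse sheaf, the operator $1 - T_\ell$ is nilpotent with some finite index $N_\ell$. Once $\ba(\ell) \ge N_\ell$ for every $\ell \in \alpha^{-1}(*) \cap P$, the kernel fills out the whole of $(\Psi^u_f)_{F(\alpha)} K$, giving stabilization of the colimit, its existence as a perverse sheaf, and the desired identification with $(\Psi^u_f)_{F(\alpha)} K[|Q|-|P|]$.

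The main obstacle I anticipate is the middle step, specifically the naturality check that the transition maps $\mathscr{L}_\ba \hookrightarrow \mathscr{L}_{\ba'}$ really correspond, after chasing through Lemma~\ref{lem: ij push pull} and the explicit isomorphism $\gamma$ constructed in the preceding Proposition, to the elementary inclusion of nilpotent kernels on $(\Psi^u_f)_{F(\alpha)} K$. Once this compatibility is secured, the entire colimit computation takes place inside a single perverse sheaf with finite-dimensional endomorphism algebra, and the remainder of the argument reduces to the standard fact that increasing nilpotent kernels of a finite family of commuting nilpotent operators exhaust the ambient module in finitely many steps.
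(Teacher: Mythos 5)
Your approach is essentially the one the paper takes: apply Lemma~\ref{lem: ij push pull} to $(f, K\otimes f^*\mathscr{L}_{\ba})$, reduce the $(\Z')$-invariants of the tame nearby cycle to the simultaneous kernel of $(1-T_\ell)$ inside the unipotent part (your observation that the $\neq 1$ generalized-eigenspace summand carries no simultaneous $T_\ell$-fixed vectors is correct and worth spelling out), invoke the preceding Proposition's isomorphism $\gamma$ to translate into nilpotent kernels inside the fixed perverse sheaf $(\Psi^u_f)_{F(\alpha)}K$, and conclude stabilization from nilpotency of $1-T_\ell$ on a finite-length object. That all matches.

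There is one genuine gap: you never tie in $\Upsilon^\alpha_f(K)$ itself. In this paper $\Upsilon^\alpha_f(K)$ is defined (Definition in Section~3.1) as $\varinjlim \bi_\alpha^* \bj_{!*}(K\otimes f^*\mathscr{L}_{\ba})[|Q|-|P|]$, built from the \emph{intermediate extension} $j_{!*}$, not from ${}^p\scH^{|Q|-|P|}\bi_\alpha^* j_*$. Your argument identifies the $j_*$-colimit with $(\Psi^u_f)_{F(\alpha)}K[|Q|-|P|]$, but it does not establish the claimed coincidence with $\Upsilon^\alpha_f(K)$. The paper closes this loop via Lemma~\ref{lem: ij middle extension}, which shows $\bi_\alpha^*\bj_{!*}(K\otimes f^*\mathscr{L}_{\ba}) \cong ((\Psi^u_f)_{F(\alpha)}(K\otimes f^*\mathscr{L}_{\ba}))^{(\Z')^{\alpha^{-1}(*)\cap P}}$; combined with your observation that $(\Z')$-invariants of $\Psi^t$ live entirely in $\Psi^u$, this shows the $j_*$ and $j_{!*}$ terms agree for each $\ba$, so the colimits agree. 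You have the conceptual ingredient needed (your step about the $\neq 1$ part is precisely what makes the two lemmas give the same answer), but the invocation of Lemma~\ref{lem: ij middle extension} is missing and must be added to reach the statement as written.
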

\begin{proof}
    We first prove the isomorphism.  The left hand side in Equation~\eqref{eqn: nilpotent kernel left} must stabilize for $\ba$ sufficiently large and equal $(\Psi_f^{u})_{F(\alpha)} K[|Q|-|P|]$ for $\ba$ sufficiently large.  After taking this intersection, the right hand side in Equation~\eqref{eqn: nilpotent kernel right} is the same as taking invariants under the action of $(\Z')^{\alpha^{-1}(*) \cap P}$, so becomes ${}^p \scH^{|Q|-|P|} \bi_{\alpha}^* j_* (K \otimes \mathscr{L}_{\ba})$ for such $\ba$ by Lemma~\ref{lem: ij push pull}, isomorphic also to $\bi_{\alpha}^* j_{!*} (K \otimes \mathscr{L}_{\ba})[|Q|-|P|]$ by Lemma~\ref{lem: ij middle extension}.  Therefore, for this choice of $\ba$,
    \[ \Upsilon^{\alpha}_f(K) \cong \bi_{\alpha}^* j_* (K \otimes \mathscr{L}_{\ba}) \cong (\Psi_f^{u})_{F(\alpha)} K. \]
    We also note that for $\ba \colon \alpha^{-1}(*) \cap P \to \Z_{\ge 1}$ sufficiently large, there is an isomorphism
    \[ \Upsilon^{\alpha}_f(K) \cong {}^p \scH^{|Q|-|P|} \bi_{\alpha}^* j_* ( K \otimes f^* \mathscr{L}_{\ba} ) \]
    so the relevant limit stabilizes at a finite level.
\end{proof}

Recall now that for general $\alpha \colon P_* \rightarrow Q_*$ surjective, $\bi_{\alpha}^*$ factors as $\bi_{\alpha^{\Delta}}^* \circ \bi_{\alpha^*}^*$.  A priori, the functor $\bi_{\alpha^{\Delta}}^*[|Q|-|P \setminus \alpha^{-1}(*)|]$ is not t-exact for the perverse t-structure, but only right exact.  However, under the assumption that $(f, K)$ is universally locally acyclic over $\G_m^P$, we can recover the perversity of $\Upsilon^{\alpha}_f(K)$.

\begin{corollary}\label{cor: unipotent general bases ula}
    Let $(f,K)$ be $\Psi$-factorizable and universally locally acyclic with respect to the restriction of $f$ over $\G_m^P \rightarrow \A^P$.  Let $\alpha \colon P_* \rightarrow Q_*$ be surjective.  Then $\varinjlim {}^p \scH^{|Q|-|P|} \bi_{\alpha}^* j_*(K \otimes \mathscr{L}_{\ba})$ coincides with $\Upsilon^{\alpha}_f(K) \cong (\Psi_f^{u})_{F(\alpha)} K[|Q|-|P|]$.
\end{corollary}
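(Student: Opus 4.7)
The plan is to reduce the general case to the case $\alpha = \alpha^*$, already handled by Corollary~\ref{cor: unipotent general bases}, via the factorization $\alpha = \alpha^{\Delta}\circ\alpha^*$, so that $\bi_{\alpha}^* = \bi_{\alpha^{\Delta}}^*\circ\bi_{\alpha^*}^*$. Applying Corollary~\ref{cor: unipotent general bases} to $\alpha^*$ shows that $\Upsilon^{\alpha^*}_f(K)\cong (\Psi^u_f)_{F(\alpha^*)}K\,[|P\setminus\alpha^{-1}(*)|-|P|]$ is a perverse sheaf on $X\times_{\A^P}\A^{P\setminus\alpha^{-1}(*)}\times\overline{0}$, that the defining colimit stabilizes at a finite stage, and that
\[ \Upsilon^{\alpha^*}_f(K) \cong \bi_{\alpha^*}^* j_{!*}(K\otimes f^*\mathscr{L}_{\ba})[|P\setminus\alpha^{-1}(*)|-|P|] \cong {}^p\scH^{|P\setminus\alpha^{-1}(*)|-|P|}\bi_{\alpha^*}^* j_*(K\otimes f^*\mathscr{L}_{\ba}) \]
for $\ba$ sufficiently large. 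Since the colimit stabilizes at a finite stage, pullback by $\bi_{\alpha^{\Delta}}^*$ commutes with it, so
\[ \Upsilon^{\alpha}_f(K) \cong \bi_{\alpha^{\Delta}}^*\bigl[|Q|-|P\setminus\alpha^{-1}(*)|\bigr]\,\Upsilon^{\alpha^*}_f(K). \]

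The central technical input is to show that $\Upsilon^{\alpha^*}_f(K)$ is universally locally acyclic with respect to the structure map $X\times_{\A^P}\A^{P\setminus\alpha^{-1}(*)}\times\overline{0}\to\A^{P\setminus\alpha^{-1}(*)}$. Because $\mathscr{L}_{\ba}$ is lisse on $\G_m^P$, the pair $(f,\, K\otimes f^*\mathscr{L}_{\ba})$ remains universally locally acyclic over $\G_m^P$. By Proposition~\ref{prop: galois action} combined with $\Psi$-factorizability, $(\Psi^u_f)_{F(\alpha^*)}K$ decomposes as an iterated composition of one-dimensional unipotent nearby cycles along the coordinate projections indexed by $\alpha^{-1}(*)\cap P$. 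The standard fact that one-dimensional unipotent nearby cycles preserve universal local acyclicity in the transverse directions, iterated along this decomposition, then yields the ULA property of $\Upsilon^{\alpha^*}_f(K)$ over all of $\A^{P\setminus\alpha^{-1}(*)}$.

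With this in hand, $\bi_{\alpha^{\Delta}}\colon \A^Q\hookrightarrow\A^{P\setminus\alpha^{-1}(*)}$ is a regular closed embedding of smooth schemes of codimension $|P\setminus\alpha^{-1}(*)|-|Q|$, and for a ULA perverse sheaf the $*$-pullback shifted by minus the codimension is perverse. Consequently $\bi_{\alpha^{\Delta}}^*[|Q|-|P\setminus\alpha^{-1}(*)|]\,\Upsilon^{\alpha^*}_f(K)$, which by the first paragraph equals $\Upsilon^{\alpha}_f(K)$, is itself perverse, and the corresponding colimit stabilizes at finite level. Applying this $t$-exactness to the identification $\Upsilon^{\alpha^*}_f(K)\cong {}^p\scH^{|P\setminus\alpha^{-1}(*)|-|P|}\bi_{\alpha^*}^* j_*(K\otimes f^*\mathscr{L}_{\ba})$ above produces the isomorphism $\Upsilon^{\alpha}_f(K)\cong {}^p\scH^{|Q|-|P|}\bi_{\alpha}^* j_*(K\otimes f^*\mathscr{L}_{\ba})$ for such $\ba$, yielding the first asserted coincidence.

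For the isomorphism $\Upsilon^{\alpha}_f(K)\cong (\Psi^u_f)_{F(\alpha)}K\,[|Q|-|P|]$, $\Psi$-factorizability expands $(\Psi^u_f)_{F(\alpha)}K$ as $(\Psi^u_f)_{F(\alpha^{\Delta})}(\Psi^u_f)_{F(\alpha^*)}K$. The specialization $F(\alpha^{\Delta})$ takes place entirely within $\G_m^{P\setminus\alpha^{-1}(*)}$, where $(\Psi^u_f)_{F(\alpha^*)}K$ is ULA by the preceding paragraph, so the outer nearby cycle reduces to the pullback $\bi_{\alpha^{\Delta}}^*$ (with the codimension shift), giving
\[ (\Psi^u_f)_{F(\alpha)}K\,[|Q|-|P|] \cong \bi_{\alpha^{\Delta}}^*\bigl[|Q|-|P\setminus\alpha^{-1}(*)|\bigr]\Upsilon^{\alpha^*}_f(K) \cong \Upsilon^{\alpha}_f(K). \]
The principal obstacle is the ULA propagation in the second paragraph: verifying carefully that unipotent nearby cycles preserve universal local acyclicity transversely, and that this iterates correctly via $\Psi$-factorizability over the coordinates indexed by $\alpha^{-1}(*)\cap P$; once this is in place, the remaining identifications follow formally from $\Psi$-factorizability and the standard interplay of ULA with smooth and closed-embedding base change.
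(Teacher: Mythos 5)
Your overall strategy — factor $\alpha = \alpha^{\Delta}\circ\alpha^*$, apply Corollary~\ref{cor: unipotent general bases} to $\alpha^*$, then argue that $\bi_{\alpha^{\Delta}}^*$ of the result can be identified with the further nearby cycle $(\Psi_f)_{F(\alpha^{\Delta})}$ — is the same skeleton as the paper's. But the step you label as the central technical input, ``the standard fact that one-dimensional unipotent nearby cycles preserve universal local acyclicity in the transverse directions,'' is in this setting not a citable off-the-shelf fact; it is precisely what needs to be proved, and it is exactly what the paper's Cartesian-square argument establishes. The subtlety is that $K$ is assumed ULA only over $\G_m^P$, not in a neighborhood of the special locus $\{x_p = 0 : p \in \alpha^{-1}(*)\}$, yet the nearby cycle $(\Psi^u_f)_{F(\alpha^*)}$ is taken at that boundary. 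Results of the type ``$\Psi$ preserves ULA transversely'' typically require ULA over the entire product base (special fiber included) and then often reduce to the trivial statement that $\Psi$ is just restriction; that situation does not apply here. So asserting ULA of $(\Psi^u_f)_{F(\alpha^*)}K$ over $\A^{P\setminus\alpha^{-1}(*)}$ (or even just $\G_m^{P\setminus\alpha^{-1}(*)}$) without argument leaves a genuine gap.

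The paper closes this gap differently: it writes $\alpha$ in two ways, $\alpha = \alpha^{\Delta}\circ\alpha^* = \overline{\alpha}^*\circ\overline{\alpha}^{\Delta}$, where $\overline{\alpha}^{\Delta}$ is a pure-diagonal map whose generic specialization stays inside $\G_m^P$ (where ULA is hypothesized). It then sets up a Cartesian square of generalized diagonals and a commutative square of nearby-cycle objects in which the top arrow is an isomorphism by $\Psi$-goodness (base change), the bottom by $\Psi$-factorizability, and the right vertical by local acyclicity over $\G_m^P$; the sought isomorphism $\bi_{\alpha^{\Delta}}^* (\Psi)_{F(\alpha^*)} K \xrightarrow{\sim} (\Psi)_{F(\alpha)}K$ falls out as the left vertical arrow. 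Your proposed ``standard fact'' is exactly this left vertical arrow being an isomorphism, so you have correctly identified what must be shown, but you have not shown it — you have instead assumed it. If you want to salvage your version, you should either prove the local acyclicity claim via an argument like the paper's square, or reorganize to use the alternate factorization $\overline{\alpha}^*\circ\overline{\alpha}^{\Delta}$ so that the local acyclicity hypothesis over $\G_m^P$ applies directly.
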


\begin{proof}
    This follows from Corollary~\ref{cor: unipotent general bases} if we show that the map
    \begin{equation}\label{eqn: local acyclic arrow}
    \begin{aligned}
        \bi_{\alpha^{\Delta}}^* \Upsilon^{\alpha}_f(K)[|Q|-|P \setminus \alpha^{-1}(*)|] &\rightarrow (\Psi_f)_{F(\alpha^{\Delta})} \Upsilon^{\alpha}_f(K)[|Q|-|P\setminus \alpha^{-1}(*)|] \\
        &\cong (\Psi_f)_{F(\alpha^{\Delta})} (\Psi_f^u)_{F(\alpha^*)} K[|Q|-|P|]
    \end{aligned}
    \end{equation}
    is an isomorphism.  To see this, the right side of this arrow is perverse since it is isomorphic to an iterated one-dimensional specialization along successive Henselian discrete valuation rings and using the t-exactness of one-dimensional nearby cycles.  By applying Corollary~\ref{cor: unipotent general bases}, this implies that
    \[ \bi_{\alpha^{\Delta}}^* {}^p \scH^{|Q|-|P \setminus \alpha^{-1}(*)|} \bi_{\alpha^*}^* j_* (K \otimes f^* \mathscr{L}_{\ba}) \]
    is concentrated in perverse degree $|\alpha^{-1}(*)|-1$ for sufficiently large $\mathscr{L}_{\ba}$ and is isomorphic to
    \[ {}^p \scH^{|Q|-|P|} \bi_{\alpha}^* j_* (K \otimes f^* \mathscr{L}_{\ba}). \]

    We now show the desired property in Equation~\ref{eqn: local acyclic arrow}.  We write $Q \sqcup \alpha^{-1}(*)$ as a disjoint union, picking new representatives if there is a nonempty set-theoretic intersection.  We may write $\alpha = \alpha^{\Delta} \circ \alpha^* = \overline{\alpha}^* \circ \overline{\alpha}^{\Delta}$ where $\overline{\alpha}^{\Delta} \colon P_* \rightarrow Q \sqcup \alpha^{-1}(*)$ so that $\overline{\alpha}^{\Delta}(i) = \alpha(i)$ if $\alpha(i) \ne *$ and $i$ otherwise.  Then the diagram
    \begin{equation}
    \begin{tikzcd}
        \A^Q \arrow{r}{\bi_{\alpha^\Delta}} \arrow{d}{\bi_{\overline{\alpha}^*}} & \A^{P \setminus \alpha^{-1}(*)} \arrow{d}{\bi_{\alpha^*}} \\
        \A^{Q \sqcup \alpha^{-1}(*) \setminus \{ * \}} \arrow{r}{\bi_{\overline{\alpha}^{\Delta}}}& \A^P
    \end{tikzcd}
    \end{equation}
    is Cartesian.  As a result, using $\Psi$-goodness, the diagram
    \begin{equation}
    \begin{tikzcd}
    \bi_{\alpha^{\Delta}}^* (\Psi)_{F(\alpha^*)} K \arrow{r} \arrow{d}& (\Psi)_{F(\overline{\alpha}^*)} \bi_{\overline{\alpha}^{\Delta}}^* K \arrow{d} \\
    (\Psi)_{F(\alpha)} K \arrow{r}{\sim}& (\Psi)_{F(\overline{\alpha}^*)} (\Psi)_{F(\overline{\alpha}^{\Delta})} K
    \end{tikzcd}
    \end{equation}
    commutes where the top horizontal arrow is an isomorphism by $\Psi$-goodness, the bottom horizontal arrow is an isomorphism by $\Psi$-factorizability, and right vertical arrow is an isomorphism by the local acyclicity over $\G_m^P$.  After taking invariants under $W^{\alpha^{-1}(*) \setminus \{ * \}}$ and applying Corollary~\ref{cor: unipotent general bases}, we conclude that Equation~\ref{eqn: local acyclic arrow} is an isomorphism.
\end{proof}

We now note that the factorization of $\alpha$ into $\alpha^{\Delta} \circ \alpha^*$ is compatible with composition of maps of finite pointed sets, so that $\bi_{(\beta \alpha)^{\Delta}} = \bi_{\alpha^{\Delta}} \circ \bi_{\beta^{\Delta}}$.  The previous corollary gives an isomorphism
\[ \bi_{(\beta \alpha)^{\Delta}}^* (\Psi_f^{u})_{F((\beta \alpha)^*)} K \cong \Upsilon^{\beta \alpha}_f(K). \]
For a composition, Achar and Riche construct a map
\begin{equation}
    \Upsilon^{\beta \alpha}_f(K) \rightarrow \Upsilon^{\beta}_{f^{\alpha}} \Upsilon^{\alpha}_f(K).
\end{equation}
On the other hand, if $(f, K)$ is $\Psi$-factorizable, we get an isomorphism
\begin{equation}\label{eqn: factorizable arrow}
    \bi_{(\beta \alpha)^{\Delta}}^* (\Psi_f)_{F((\beta \alpha)^*)} K \cong \bi_{\beta^{\Delta}}^* \bi_{\alpha^{\Delta}}^* (\Psi_f)_{F(\beta')} (\Psi_f)_{F(\alpha^*)} K,
\end{equation}
where $\beta' \colon (P \setminus \alpha^{-1}(*))_* \rightarrow (P \setminus (\beta \alpha)^{-1}(*))_*$ is the map sending all $i$ to itself except for $i$ such that $\alpha(i) \ne *$ but $\beta \alpha(i) = *$.  There is a canonical base change map for nearby cycles giving a map
\begin{equation}
    \bi_{\beta^{\Delta}}^* \bi_{\alpha^{\Delta}}^* (\Psi_f)_{F(\beta')} (\Psi_f^{u})_{F(\alpha^*)} K \rightarrow \bi_{\beta^{\Delta}}^* (\Psi_{f^{\alpha}})_{F(\beta')} \bi_{\alpha^{\Delta}}^* (\Psi_f^{u})_{F(\alpha^*)} K,
\end{equation}
where $f^{\alpha}$ is the pullback of $f$ along the map $\bi_{\alpha^{\Delta}}$.  There is an isomorphism
\begin{equation}
    \bi_{\beta^{\Delta}}^* (\Psi_{f^{\alpha}})_{F(\beta')} \bi_{\alpha^{\Delta}}^* (\Psi_f^{u})_{F(\alpha^*)} K \cong \Upsilon^{\beta}_{f^{\alpha}} \Upsilon^{\alpha}_{f}(K),
\end{equation}
coming from two applications of Corollary~\ref{cor: unipotent general bases}.  The following proposition shows that the above maps are compatible with each other.

\begin{proposition}\label{prop: unipotent compatibility}
    Let $(f,K)$ and $(f^{\alpha}, \bi_{\alpha^{\Delta}}^* (\Psi_f^u)_{F(\alpha^*)} K)$ be $\Psi$-factorizable and universally locally acyclic over $\G_m^P$ and $\G_m^Q$, respectively.  Let $\alpha \colon P_* \to Q_*$ and $\beta \colon Q_* \to R_*$ be maps in $\FinS_*$.  Then $\Upsilon^{\alpha}_f(K)$ is $\Psi$-good, and the following diagram commutes where the vertical arrows are isomorphisms coming from Corollary~\ref{cor: unipotent general bases}:
    \begin{equation}\label{eqn: compatibility square}
        \begin{tikzcd}
            \Upsilon^{\beta \alpha}_f(K) \arrow{r} \arrow{d}& \Upsilon^{\beta}_{f^{\alpha}} \Upsilon^{\alpha}_f(K) \arrow{d} \\
            \bi_{(\beta \alpha)^{\Delta}}^* (\Psi_f^u)_{F((\beta \alpha)^*)} K \arrow{r}& \bi_{\beta^{\Delta}}^* (\Psi_{f^{\alpha}}^u)_{F(\beta^*)} \bi_{\alpha^{\Delta}}^* (\Psi_f^u)_{F(\alpha^*)} K.
        \end{tikzcd}
    \end{equation}
    Moreover, the bottom row is an isomorphism, so the map $\Upsilon^{\beta \alpha}_f(K) \rightarrow \Upsilon^{\beta}_{f^{\alpha}} \Upsilon^{\alpha}_f(K)$ is an isomorphism.
\end{proposition}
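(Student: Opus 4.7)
The plan is to establish three things in order: (i) $\Psi$-goodness of $\Upsilon^{\alpha}_f(K)$, (ii) that the bottom horizontal arrow of diagram~\eqref{eqn: compatibility square} is an isomorphism, and (iii) commutativity of the square. The top horizontal arrow being an isomorphism then follows from (ii), (iii), and the fact that the vertical arrows are isomorphisms by Corollary~\ref{cor: unipotent general bases ula}. Assertion (i) is immediate: Corollary~\ref{cor: unipotent general bases ula} identifies $\Upsilon^{\alpha}_f(K) \cong \bi_{\alpha^{\Delta}}^* (\Psi_f^u)_{F(\alpha^*)} K$, and the hypothesis that this pair is $\Psi$-factorizable implies it is $\Psi$-good by definition.

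For (ii), I would begin with the $\Psi$-factorizability isomorphism $(\Psi_f)_{F((\beta\alpha)^*)} K \simeq (\Psi_f)_{F(\beta')}(\Psi_f)_{F(\alpha^*)} K$ coming from the factorization $(\beta\alpha)^* = \beta' \circ \alpha^*$. By Proposition~\ref{prop: galois action}, passing to unipotent parts gives $(\Psi_f^u)_{F((\beta\alpha)^*)} K \simeq (\Psi_f^u)_{F(\beta')}(\Psi_f^u)_{F(\alpha^*)} K$. Now pull back along $\bi_{(\beta\alpha)^{\Delta}}^* = \bi_{\beta^{\Delta}}^* \bi_{\alpha^{\Delta}}^*$ and commute $\bi_{\alpha^{\Delta}}^*$ past the outer $(\Psi_f^u)_{F(\beta')}$ via the canonical base change morphism. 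The latter is an isomorphism by the Cartesian-square argument at the end of the proof of Corollary~\ref{cor: unipotent general bases ula}: universal local acyclicity of $(f,K)$ over $\G_m^P$ supplies the needed $\Psi$-goodness across the diagram, and the pullback along $\bi_{\alpha^{\Delta}}$ converts the $\beta'$-specialization on the $\A^{P \setminus \alpha^{-1}(*)}$ side into the $\beta^*$-specialization on the $\A^Q$ side.

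For (iii), both composite morphisms from $\Upsilon^{\beta\alpha}_f(K)$ to $\bi_{\beta^{\Delta}}^* (\Psi_{f^{\alpha}}^u)_{F(\beta^*)} \bi_{\alpha^{\Delta}}^* (\Psi_f^u)_{F(\alpha^*)} K$ are, under the identifications of Corollary~\ref{cor: unipotent general bases ula}, assembled from the same canonical morphisms: the adjunction unit $j_* \to (\bi_{\alpha^*})_* \bi_{\alpha^*}^* j_*$ twisted by the local systems $\mathscr{L}_{\ba}$, base change along the closed embeddings $\bi_{(-)^{\Delta}}$, and composition of nearby cycles along specializations. The Achar-Riche composition map packages these in the $\Upsilon$-framework, while the bottom row of~\eqref{eqn: compatibility square} packages them in the $(\Psi^u)$-framework. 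I would check commutativity graded piece by graded piece in the colimit defining $\Upsilon^{\beta\alpha}_f(K)$, reducing it to naturality of the relevant adjunctions combined with Lemma~\ref{lem:L relation}. The main obstacle is this final naturality bookkeeping: the isomorphism of Corollary~\ref{cor: unipotent general bases ula} passes through the nilpotent-kernel description of Equations~\eqref{eqn: nilpotent kernel left}--\eqref{eqn: nilpotent kernel right}, so one must track carefully how those kernels interact with the composition of specializations and the Achar-Riche adjunction machinery.
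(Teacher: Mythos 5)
Your high-level organization into (i) $\Psi$-goodness, (ii) the bottom row being an isomorphism, and (iii) commutativity matches the paper's structure, and your treatments of (i) and (ii) are in the right spirit. But for (iii), which is the substantive content of the proposition, you explicitly flag the ``final naturality bookkeeping'' as the main obstacle and do not carry it out. Your plan — ``check commutativity graded piece by graded piece in the colimit'' and track how the nilpotent-kernel description of Equations~\eqref{eqn: nilpotent kernel left}--\eqref{eqn: nilpotent kernel right} interacts with the adjunction machinery — is precisely the hard part you need to execute, and as written it is not clear that approach terminates cleanly, because the nilpotent-kernel identification of Corollary~\ref{cor: unipotent general bases} is not an intrinsically colimit-compatible description in any obvious way.

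The paper sidesteps this by invoking the stabilization statement in Corollary~\ref{cor: unipotent general bases}: choose one $\bc \colon P \to \Z_{\ge 1}$ large enough (and $\beta\alpha$-special) so that $\Upsilon^{\beta\alpha}_f(K) \cong \bi_{\beta\alpha}^* j_*(K \otimes f^*\mathscr{L}_{\bc})$ with no colimit at all. Then one decomposes $\bc$ into $\ba$ (supported where $\alpha(p) = *$) and $\bb$ (supported elsewhere), with a derived exponent $\bb'$ on $Q$ defined by $\bb'(q) = -|\alpha^{-1}(q)| + 1 + \sum_{p \in \alpha^{-1}(q)} \bb(p)$, so that the Achar--Riche composition map is literally the top row of an explicit commutative square
\[
\bi_{\beta\alpha}^* j_*(K \otimes \mathscr{L}_{\bc}) \longrightarrow \bi_{\beta}^* j^{\alpha}_*\bigl(\bi_{\alpha}^* j_*(K \otimes \mathscr{L}_{\ba}) \otimes \mathscr{L}_{\bb'}\bigr),
\]
while the bottom row is built from the adjunction unit $j_* \to (\bi_{\alpha^*})_* \bi_{\alpha^*}^* j_*$ and the identity $j^{\alpha}_* = \bi_{\alpha^*}^* j_* (\bi_{\alpha^*})_*$. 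Commutativity is then a finite diagram chase between two explicit compositions of the same canonical maps, with $\mathscr{L}_{\bc} \cong \mathscr{L}_{\ba} \otimes \mathscr{L}_{\bb}$ providing the key compatibility. In short: you are missing the stabilization trick that replaces the colimit and the kernel description by a single, finite-level computation, and without it the commutativity check (iii) remains open in your write-up.
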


\begin{proof}
    We explain how to make sense of the vertical arrows in Diagram~\ref{eqn: compatibility square}.  Since $(f, K)$ is $\Psi$-factorizable, it follows that $\Upsilon^{\alpha}_f(K) \cong \bi_{\alpha^{\Delta}}^* (\Psi_f^u)_{F(\alpha^*)} K$.  Since the pair $(f^{\alpha}, (\Psi_f)_{F(\alpha)} K)$ is $\Psi$-factorizable, the right vertical arrow is an isomorphism.  The left vertical arrow is also an isomorphism by the $\Psi$-factorizability.

    We may choose $\bc \colon P \rightarrow \Z_{\ge 1}$ sufficiently large so that
    \[ \Upsilon^{\beta \alpha}_f(K) \cong \bi_{\beta \alpha}^* j_* (K \otimes f^* \mathscr{L}_{\bc}) \]
    and $\bc$ is $\beta \alpha$-special, i.e. $\bc(p) = 1$ for $\beta \alpha(p) \ne *$.  Defining
    \[
    \begin{aligned}
    \ba(p) = \begin{cases}
        \bc(p) & \alpha(p) = * \\ 1 & \alpha(p) \ne *,
    \end{cases} &
    \bb(p) = \begin{cases}
        \bc(p) & \alpha(p) \ne * \\ 1 & \alpha(p) = *,
    \end{cases}
    \end{aligned}
    \]
    and
    \[ \bb'(q) = -|\alpha^{-1}(q)| + 1 + \sum_{p \in \alpha^{-1}(q)} \bb(p), \]
    the construction of Achar and Riche produces a map
    \begin{equation}
        \begin{tikzcd}
            \bi_{\beta \alpha}^* j_*(K \otimes \mathscr{L}_{\bc}) \arrow{r} \arrow{d}{\sim}& \bi_{\beta}^* j^{\alpha}_*(\bi_{\alpha}^* j_*(K \otimes \mathscr{L}_{\ba}) \otimes \mathscr{L}_{\bb'}) \arrow{d}{\sim} \\
            \Upsilon^{\beta \alpha}_f(K) \arrow{r}& \Upsilon^{\beta}_{f^{\alpha}} \Upsilon^{\alpha}_f(K).
        \end{tikzcd}
    \end{equation}
    Here $j^{\alpha}$ is the map $X \times_{\A^P} (\G_m)^Q \rightarrow X \times_{\A^P} \A^Q$.  The construction of the map in Equation~\ref{eqn: factorizable arrow} amounts to an adjunction, which yields a commutative diagram in conjunction with Corollary~\ref{cor: unipotent general bases},
    \begin{equation}
        \begin{tikzcd}
            \bi_{\beta \alpha}^* j_*(K \otimes \mathscr{L}_{\ba} \otimes \mathscr{L}_{\bb}) \arrow{r} \arrow{d}& \bi_{\beta \alpha}^* j_* (\bi_{\alpha^*})_{*} \bi_{\alpha^*}^* j_*(K \otimes \mathscr{L}_{\ba} \otimes \mathscr{L}_{\bb}) \arrow{d} \\
            \bi_{(\beta \alpha)^{\Delta}}^* (\Psi^u_{f}) K \arrow{r} &  \bi_{\beta^{\Delta}}^* \bi_{\alpha^{\Delta}}^* (\Psi_f^{u})_{F(\beta')} (\Psi_f^{u})_{F(\alpha^*)} K
        \end{tikzcd}
    \end{equation}
    with $\bc$ chosen large enough so that the vertical arrows are isomorphisms.  Since $j^{\alpha}_* = \bi_{\alpha^*}^* j_* (\bi_{\alpha^*})_{*}$, there is an isomorphism
    \[ \bi_{\beta \alpha}^* j_* (\bi_{\alpha^*})_{*} \bi_{\alpha^*}^* j_*(K \otimes \mathscr{L}_{\ba} \otimes \mathscr{L}_{\bb}) \cong \bi_{(\beta \alpha)^{\Delta}}^* \bi_{\beta'}^* j^{\alpha}_* \bi_{\alpha^*}^* j_*(K \otimes \mathscr{L}_{\ba} \otimes \mathscr{L}_{\bb}). \]
    Since $\mathscr{L}_{\bc} \cong \mathscr{L}_{\ba} \otimes \mathscr{L}_{\bb}$, we may form a commutative diagram
    \[
        \begin{tikzcd}
            \bi_{\beta \alpha}^* j_* (\bi_{\alpha^*})_{*} \bi_{\alpha^*}^* j_*(K \otimes \mathscr{L}_{\bc}) \arrow{r} \arrow{d} & \bi_{\beta} j^{\alpha}_*(\bi_{\alpha}^* j_*(K \otimes \mathscr{L}_{\ba}) \otimes \mathscr{L}_{bb'}) \arrow{d} \\
            \bi_{\beta^{\Delta}}^* \bi_{\alpha^{\Delta}}^* (\Psi_f^{u})_{F(\beta')} (\Psi_f^{u})_{F(\alpha^*)} K \arrow{r}& \bi_{\beta^{\Delta}}^* (\Psi_{f^{\alpha}}^{u})_{F(\beta')} \bi_{\alpha^{\Delta}}^* (\Psi_f^{u})_{F(\alpha^*)} K.
        \end{tikzcd}
    \]
    A diagram chase shows that the composition of the arrows in the above diagrams lines up with the construction of \cite[Section~2.10]{achar2023higher}, thus proving the relevant commutativity.
\end{proof}

In the case of the Beilinson-Drinfeld Grassmannian, we assert that the hypotheses in the previous theorem that use Conjecture~\ref{prop: factorizable} are already known in the product situation and can be transported along proper pushforward and smooth pullback, as in the construction of Satake sheaves.  Thus, we may state the following corollary without relying explicitly on Conjecture~\ref{prop: factorizable}.

\begin{corollary}\label{cor: maps agree}
    The isomorphisms in Proposition~\ref{prop: psi factorizability grassmannian} and \cite[Theorem~3.2]{achar2023higher} agree.
\end{corollary}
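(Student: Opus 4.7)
The plan is to deduce this corollary as a direct application of Proposition~\ref{prop: unipotent compatibility} to the pair $(f,K) = (\mathfrak{q}_I, j_!(\mathcal{S}_{V_1} \widetilde{\boxtimes} \dots \widetilde{\boxtimes} \mathcal{S}_{V_n}))$ together with the composable maps $\alpha, \beta$ in $\FinS_*$ underlying the fusion construction. That proposition produces the commutative square~\eqref{eqn: compatibility square}, whose vertical arrows are the isomorphisms of Corollary~\ref{cor: unipotent general bases ula}, whose top row is by construction the Achar--Riche composition map of \cite[Theorem~3.2]{achar2023higher}, and whose bottom row is the composition isomorphism for iterated unipotent nearby cycles produced by $\Psi$-factorizability as in Proposition~\ref{prop: psi factorizability grassmannian}. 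Commutativity of the square is exactly the assertion that the two isomorphisms agree, so the corollary reduces to verifying the hypotheses of Proposition~\ref{prop: unipotent compatibility} for the BD Grassmannian setup.

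The first set of hypotheses is essentially immediate: $\Psi$-factorizability of $(f,K)$ is Proposition~\ref{prop: psi factorizability grassmannian}, and universal local acyclicity of $(f,K)$ over the unramified locus $\G_m^P \subset \A^P$ (corresponding to legs that are pairwise distinct and disjoint from the parahoric ramification $N \subset C$) follows from Richarz's theorem \cite{richarz2014new} after using the factorization isomorphism to identify $\Gr_{G,I}$ over this open subscheme with an \'etale product of single-leg Grassmannians carrying no parahoric ramification, on which the Satake sheaf is an external product.

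The main obstacle is the second hypothesis of Proposition~\ref{prop: unipotent compatibility}: one must check that $(f^\alpha, \bi_{\alpha^{\Delta}}^* (\Psi^u_f)_{F(\alpha^*)} K)$ is $\Psi$-factorizable and universally locally acyclic over $\G_m^Q$, and this normally would appeal to Conjecture~\ref{prop: factorizable}. The BD-specific argument, hinted at in the statement, is to observe that $\bi_{\alpha^{\Delta}}^* (\Psi^u_f)_{F(\alpha^*)} K$ is again a Satake-type $!$-extension on a BD Grassmannian with legs indexed by $Q$ but with augmented parahoric ramification at the collision points of $\alpha^*$. Thus the pair itself arises from the product situation: its $\Psi$-factorizability and ULA over $\G_m^Q$ can be checked by a K\"unneth computation as in Lemma~\ref{lem:L relation} and in the proof of Proposition~\ref{prop: psi factorizability grassmannian}, and then transported to the target via the chain of proper pushforwards and smooth pullbacks that appears in the construction of Satake sheaves, both of which preserve $\Psi$-goodness and ULA. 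This is the precise content of the statement that the hypotheses are ``already known in the product situation and can be transported''.

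With both sets of hypotheses verified, Proposition~\ref{prop: unipotent compatibility} applies, and the commutativity of~\eqref{eqn: compatibility square} yields the corollary after a final unwinding of the vertical identifications: the top row is by construction the Achar--Riche fusion map of \cite[Section~2.10]{achar2023higher}, while the bottom row is, by construction, the iterated unipotent specialization map~\eqref{eqn: psi factorizability definition} restricted to unipotent invariants, which is the fusion isomorphism of Proposition~\ref{prop: psi factorizability grassmannian}. The hard part throughout is the factorizability/ULA bootstrap in the third paragraph; everything else is formal, once Proposition~\ref{prop: unipotent compatibility} is in hand.
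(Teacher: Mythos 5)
Your proposal is correct and takes essentially the same route as the paper: both apply Proposition~\ref{prop: unipotent compatibility}, verify its hypotheses via Richarz's ULA theorem and $\Psi$-factorizability of Satake sheaves, and then address the second hypothesis by recognizing $\bi_{\alpha^{\Delta}}^* (\Psi^u_f)_{F(\alpha^*)} K$ as a Satake-type object whose $\Psi$-factorizability can be transported from the product situation as in \cite[Proposition~3.14]{salmon2023restricted}. The one ingredient the paper makes explicit that you leave implicit is Gaitsgory's unipotency theorem \cite[Proposition~7]{gaitsgory1999construction}, which allows one to drop the superscript $u$ and identify the iterated object concretely as $\mathcal{S}_W \boxtimes Z_{V^{\otimes \alpha^{-1}(*) \cap P}}$ (a Satake sheaf boxed with a central sheaf on the affine flag fiber, rather than a sheaf with ``new'' parahoric ramification) before running that transport argument.
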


\begin{proof}
    The Satake sheaves $\mathcal{S}_{V_1 \boxtimes \dots \boxtimes V_{|I|}}$ on a Beilinson-Drinfeld Grassmannian over $\A^I$, with parahoric ramification along a point $0 \in \A^1$, satisfy the assumptions of Proposition~\ref{prop: unipotent compatibility}, so all the arrows in Proposition~\ref{prop: unipotent compatibility} are isomorphisms.  The universal local acyclicity property is a theorem of Richarz \cite{richarz2014new}.  Gaitsgory's theorem that the local Galois group acts unipotently on central sheaves \cite[Proposition~7]{gaitsgory1999construction} allows us to write $(\Psi_f^u)_{F(\alpha)} K$ as the full nearby cycle.  Gaitsgory's work on central sheaves allows us to write the object $\bi^*_{\alpha^{\Delta}} (\Psi_f)_{F(\alpha)} K$ as $\mathcal{S}_{W} \boxtimes Z_{V^{\otimes \alpha^{-1}(*) \cap P}}$ on the special fiber of the Beilinson-Drinfeld Grassmannian $\Gr_{G,I} \times_{A^P} (\G_m^Q \times \overline{0})$, where $W_q = \bigotimes_{i \in \alpha^{-1}(q)} V_i$ and $V^{\otimes \alpha^{-1}(*) \cap P} = V_{p_1} \otimes \dots \otimes V_{p_n}$ where $\{ p_1, \dots, p_n \} = \alpha^{-1}(*) \cap P$, considered as a constant sheaf (with monodromy automorphisms).  When we shriek extend to the Beilinson-Drinfeld Grassmannian $\Gr_{G,I} \times_{A^P} \A^Q$, the proof of the $\Psi$-factorizability from \cite[Proposition~3.14]{salmon2023restricted} generalizes to handle this case as well.
\end{proof}

\printbibliography

\end{document}